\documentclass[12pt]{amsart}
\usepackage{amsmath,amssymb}

\DeclareMathOperator{\Aut}{Aut}

\begin{document}
\newtheorem{thm}{Theorem}[section]
\newtheorem{lem}[thm]{Lemma}
\newtheorem{dfn}[thm]{Definition}
\newtheorem{cor}[thm]{Corollary}
\newtheorem{conj}[thm]{Conjecture}
\newtheorem{clm}[thm]{Claim}
\newtheorem{que}[thm]{Question}
\newtheorem{prop}[thm]{Proposition}

\theoremstyle{remark}
\newtheorem{exm}[thm]{Example}
\newtheorem{rem}[thm]{Remark}

\def\N{{\mathbb N}}
\def\G{{\mathbb G}}
\def\F{{\mathbb F}}
\def\Q{{\mathbb Q}}
\def\R{{\mathbb R}}
\def\C{{\mathbb C}}
\def\P{{\mathbb P}}
\def\A{{\mathbb A}}
\def\Z{{\mathbb Z}}
\def\v{{\mathbf v}}
\def\w{{\mathbf w}}
\def\x{{\mathbf x}}
\def\O{{\mathcal O}}
\def\M{{\mathcal M}}
\def\kbar{{\bar{k}}}
\def\tr{\mbox{Tr}}
\def\id{\mbox{id}}

\renewcommand{\theenumi}{\alph{enumi}}

\title[conjugacy class of an isometry in orthogonal groups]{Single conjugacy classes of isometries in orthogonal groups over local fields}

\author{Fei Xu}

\author{Bo Zhang}

\thanks{The first author is partially supported by the National Key R $\And$ D Program of China No. 2023YFA1009702 and National Natural Science Foundation of China No. 12231009}

\begin{abstract} All isometries $\sigma$ in a quadratic space over a non-archimedean local field of characteristic not 2 satisfying that any isometry $\tau$ which is conjugate to $\sigma$ in the general linear group is conjugate to $\sigma$ in the orthogonal group are determined. This extends \cite[Theorem 2.1]{Mil} to arbitary cases.
\end{abstract}

\maketitle

\vspace{-5pt}
\section{Introduction}

Motivated by studying knot cobordism in \cite{Le}, Milnor developed classification theory of conjugacy classes of orthogonal groups in \cite{Mil} and proved that two isometries of a quadratic space $(V, \langle \  , \  \rangle)$ over a local field $k$ of $char(k)\neq 2$ with the same irreducible minimal polynomial  are conjugate in ${\rm O}(V)$ in \cite[Theorem 2.1]{Mil}, which was conjectured by Levine.  It is natural to have the following more general question.

\begin{que}\label{conj}
Determine all isometries $\sigma \in {\rm O}(V)$ satisfying that any isometry in ${\rm O}(V)$ which is conjugate to $\sigma$ in ${\rm GL}(V)$ is conjugate to $\sigma$ in ${\rm O}(V)$. 
\end{que}

When $\sigma$ is semi-simple and $k$ is a non-archimedean local field, an answer to Question \ref{conj} is given in \cite{CHLX} by using the moduli spaces of  pairs of a quadratic space with a semi-simple isometry. In this paper, we give a complete answer to Question \ref{conj} over a non-archimedean local field by applying Milnor's theory for classification conjugacy classes of orthogonal groups in \cite{Mil}. 

\medskip

Let $f(x)$ be the characteristic polynomial of $\sigma\in  {\rm O}(V)$. Then 
 \begin{equation} \label{factor} f(x)=(x+1)^{m_+} (x-1)^{m_-} \prod_{i=1}^{m_1} p_i(x)^{e_i}  \prod_{j=1}^{m_2} (\tilde{p}_j(x) \tilde{p}_j^*(x))^{\tilde{e}_j} \end{equation}
  by \cite[7. Lemma (a)]{Le}, where $p_i(x)$ is a monic irreducible polynomial of degree bigger than 1 over $k$ such that $$p_i(x)=x^{\deg (p_i)} p_i(x^{-1}) \ \ \ \text{ for $1\leq i\leq m_1$} $$  and $\tilde{p}_j(x)$ is a monic irreducible polynomial over $k$ and 
$$ \tilde{p}_j^*(x) =\tilde{p}_j(0)^{-1} x^{\deg(\tilde{p}_j)} \tilde{p}_j(x^{-1})  \ \ \ \text{with} \ \ \ \tilde{p}_j(x) \neq \tilde{p}_j^*(x)$$ for $1\leq j\leq m_2$. 

For a monic irreducible factor $q(x)$ of $f(x)$, one defines 
$$ V_{q(x), \sigma} =\{ v\in V: \ q(\sigma)^l v =0   \ \text{for sufficiently large integer $l$} \} . $$

When $q(x)=x\pm 1$ or $p_i(x)$ for $1\leq i\leq m_1$, then there is an orthogonal decomposition
$$ V_{q(x), \sigma } = V_{q(x), \sigma }^{(1)} \perp \cdots \perp V_{q(x), \sigma }^{(l)} \perp \cdots $$
where  $V_{q(x), \sigma}^{(l)}$ is a free $k[x]/(q(x)^l)$-module by \cite[Theorem 3.2]{Mil}.

For $q(x)=x \pm 1$, the well-defined bilinear map 
\begin{equation} \label{bil}  b_{\sigma}^{(l)} :  (V_{q(x), \sigma}^{(l)}/q(x)V_{q(x), \sigma}^{(l)} )  \times (V_{q(x), \sigma}^{(l)}/q(x)V_{q(x), \sigma}^{(l)}) \rightarrow k \end{equation}
 $$ b_{\sigma}^{(l)}(\bar u, \bar v) = \langle (\sigma-\sigma^{-1})^{l-1} u, v \rangle $$
is non-degenerate, symmetric when $l$ is odd by \cite[Theorem 3.4]{Mil}.

The main result of this paper gives a complete answer to Question \ref{conj}  over a non-archimedean local field, which extends \cite[Theorem 2.1]{Mil} to arbitrary cases.

\begin{thm} \label{main} Let $V$ be a non-degenerated quadratic space over a non-archimedean local field $k$ with $char(k)\neq 2$. Suppose $\sigma\in {\rm O}(V)$ and $f(x)$ is the characteristic polynomial of $\sigma$ with the factorization of (\ref{factor}). Write 
$$ m_0= \sharp \{ 2l+1 : V_{x+1, \sigma}^{(2l+1)} \neq 0  \} + \sharp \{ 2l+1: V_{x-1, \sigma}^{(2l+1)} \neq 0 \} . $$

Then any isometry in ${\rm O}(V)$ which is conjugate to $\sigma$ in ${\rm GL}(V)$ is conjugate to $\sigma$ in ${\rm O}(V)$ if and only if  one of the following conditions holds

(i) $m_0\leq 1$ and $m_1=0$.

(ii) $m_0=0$ and $m_1=1$ and $V_{p_1(x), \sigma}=V_{p_1(x), \sigma}^{(2l_1+1)}$ for some integer $l_1\geq 0$.   

(iii) $m_0=m_1=1$ and $V_{p_1(x), \sigma}=V_{p_1(x), \sigma}^{(2l_1+1)}$ for some integer $l_1\geq 0$ and the unique non-zero quadratic space $$(V_{x\pm 1, \sigma}^{(2l_0+1)}/(x\pm 1)V_{x\pm 1, \sigma}^{(2l_0+1)} , b_\sigma^{(2l_0+1)})$$ defined by (\ref{bil}) is either one dimensional or a hyperbolic plane.
\end{thm}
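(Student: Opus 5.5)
We reduce the problem to counting Milnor's invariants. By Milnor's classification (\cite[Theorems 3.2, 3.4]{Mil}), the ${\rm O}(V)$-conjugacy class of $\sigma$ inside its ${\rm GL}(V)$-conjugacy class is determined by the isometry classes of a family of forms. Over each primary component $V_{q,\sigma}$ with $q=x\pm1$ or $q=p_i$, and for each level $l$, the quotient $V^{(l)}_{q,\sigma}/qV^{(l)}_{q,\sigma}$ carries a non-degenerate form: the form $b^{(l)}_\sigma$ of (\ref{bil}) when $q=x\pm1$, and a hermitian form over $E_i=k[x]/(p_i)$ relative to $x\mapsto x^{-1}$ when $q=p_i$. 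The components attached to $\tilde p_j\tilde p_j^*$ are hyperbolic and contribute nothing. For $q=x\pm1$ and $l$ even the form is skew-symmetric, hence unique. In the hermitian case the form is $\pm$-hermitian according to the parity of $l$, and for $l$ even it can be rescaled to an anti-hermitian form, which over a local field is again unique in a way that contributes no freedom.

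The ${\rm GL}(V)$-class fixes the dimensions of all these pieces. The only remaining constraint is that the orthogonal sum of the transferred forms (suitably twisted by $(\sigma-\sigma^{-1})^{l-1}$ and traces) must be isometric to $V$. This compatibility condition is what we must control. So the task is to count which tuples of forms, namely quadratic forms $\phi_{\pm,l}$ for odd $l$ and hermitian forms $h_{i,l}$ for odd $l$, of prescribed ranks have total Witt class and discriminant and Hasse invariant matching those of $V$.

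For sufficiency we check three cases. In case (i) there is at most one quadratic form $\phi$ and every other piece is rigid, so $\phi$ is determined by $V$ by Witt cancellation. In case (ii) the only free piece is one hermitian form $h$ over $E_1/E_1^0$. By Jacobowitz, over a local field such a form is classified by its rank and its discriminant in $E_1^{0\times}/N(E_1^\times)$. Its trace form has determinant that detects this norm class, which is the argument of \cite[Theorem 2.1]{Mil}, so $h$ is determined by $V$. The hypothesis that only one odd level $2l_1+1$ occurs is exactly what prevents two hermitian forms from trading their discriminants. In case (iii) there is one quadratic form $\phi$ and one hermitian form $h$. The determinant of $V$ fixes a combination of $\det\phi$ and the norm class of $h$, and the Hasse invariant fixes a second combination. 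When $\phi$ is one-dimensional its determinant is forced, and then $h$ is forced. When $\phi$ is a hyperbolic plane, any plane with the same determinant $-1$ whose Hasse invariant differs is anisotropic, and we would show that it yields an inconsistent total Hasse invariant.

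For necessity we construct two non-conjugate isometries whenever the conditions fail. This relies on the local facts that any quadratic form of dimension at least $2$ has a companion of the same dimension with the same determinant and different Hasse invariant, and that any hermitian form has a companion with the other norm class. Each failure pattern then gets its own swap.
\begin{itemize}
\item[] If there are two odd levels among $x\pm1$ (so $m_0\ge2$), we swap determinant classes between the two quadratic forms, or their Hasse invariants in dimension $1$, while preserving the total.
\item[] If $m_1\ge2$, we swap norm classes between two hermitian pieces.
\item[] If $V_{p_1,\sigma}$ has two odd levels, we do the same between those levels.
\item[] If $V_{p_1,\sigma}$ has only even levels and $m_0\ge1$, the quadratic piece is not rigid, and we alter it against the hyperbolic even part.
\item[] In case (iii), if $\phi$ has dimension at least $2$ and is not a hyperbolic plane, we change the norm class of $h$ and compensate with a change of $\phi$.
\end{itemize}

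The main obstacle is the exact bookkeeping of how the discriminant of a hermitian form over $E_1$ affects the determinant and Hasse invariant of its trace form. This computation is what distinguishes the hyperbolic-plane and one-dimensional cases in (iii) from the general case. We would first compute the trace forms explicitly for rank-one hermitian forms $\langle a\rangle$, $a\in E_1^{0\times}$, where the trace form is ${\rm Tr}_{E_1^0/k}$ of a norm form, and then extend to higher rank by additivity.
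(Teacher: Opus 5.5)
Your overall strategy (reduce to Milnor's invariants, use Witt cancellation, and build counterexamples by swapping invariants) is the paper's strategy. However, two load-bearing claims in your proposal are wrong.

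The first and decisive error is your claim that the even levels of $V_{p_i,\sigma}$ ``contribute no freedom''. This breaks the necessity half. For a reciprocal $p_i$ of degree $2d$, Milnor uses $s(x)=x^{-d}p_i(x)$, which is $\iota$-invariant. So $\langle s(\sigma)^{l-1}u,v\rangle$ gives a genuinely hermitian form over $F=k[x]/(p_i)$ at \emph{every} level $l$. Even a skew-hermitian form over the quadratic extension $F/F_0$ becomes hermitian after scaling by some $\theta$ with $\iota(\theta)=-\theta$, so such forms are never unique. Over a local field, hermitian forms of a given rank fall into two classes, distinguished by the determinant in $F_0^\times/N(F^\times)$. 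On the other hand, for even $l$ the subspace $s(\sigma)^{l/2}V^{(l)}_{p_i,\sigma}$ is totally isotropic of half dimension, so $V^{(l)}_{p_i,\sigma}$ is hyperbolic whichever class $h$ lies in. Hence any nonzero even level yields, via Proposition \ref{realization}, some $\tau$ conjugate to $\sigma$ in ${\rm GL}(V)$ but not in ${\rm O}(V)$. This is the first step of the paper's necessity proof, and it is why (ii) and (iii) demand $V_{p_1,\sigma}=V^{(2l_1+1)}_{p_1,\sigma}$. Your list of failure patterns never reaches, for instance, $m_0=0$, $m_1=1$, $V_{p_1,\sigma}=V^{(2)}_{p_1,\sigma}$; in your framework this $\sigma$ would even look rigid, contradicting the statement. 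Your fourth bullet cannot work as stated either. Because the even part is hyperbolic, Witt cancellation pins down the quadratic piece, so the freedom lies in the even-level hermitian form itself. Finally, every swap needs the modified form at a given level to be realized by an isometry of the given free $k[x]/(q^l)$-module. Propositions \ref{realization} and \ref{real-odd} supply this, and your reduction takes it for granted.

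The second error is in the bookkeeping you defer, which actually goes the opposite way from what you assume. Write $F=F_0(\sqrt{\Delta})$ and $T_c(y)={\rm Tr}_{F_0/k}(cy^2)$. The trace form of a rank-one form $\langle a\rangle$, $a\in F_0^\times$, is $T_{2a}\perp T_{-2a\Delta}$. Its determinant is $N_{F_0/k}(-\Delta)$ modulo squares, independent of $a$. So the class of $h$ is visible only in the Hasse invariant of the trace form; this is what the paper takes from Milnor's Theorem 2.7. Consequently, in (ii) it is the whole trace form, fixed by Witt cancellation, that determines $h$, not its determinant. In (iii), the determinant of $V$ pins down $\det\phi$ by itself, not ``a combination'' with the class of $h$. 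Then $\phi$ is forced when it is one-dimensional, or when it is binary with determinant $-1$. A binary space of determinant $-1$ is automatically a hyperbolic plane (O'Meara, 42:9), so the anisotropic plane of determinant $-1$ you invoke does not exist and no Hasse-invariant argument is needed. After that, Witt cancellation fixes the trace form and Milnor's result fixes $h$; this is exactly the paper's argument. The same fact also drives the compensating swaps in the necessity part: changing the class of $h$ flips the Hasse invariant of its trace form while keeping its determinant.
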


Terminology and notation are standard if not explained and are adopted from \cite{OM} and \cite{Mil}. Let $k$ be a field $k$ of $char(k)\neq 2$.  A non-degenerate quadratic space $(V,  \langle \ , \  \rangle )$ is a finite dimensional vector space $V$ over $k$ with a symmetric bilinear map $$  \langle \ , \  \rangle : \  V\times V \longrightarrow k \ \ \ \text{with} \ \ \ \det(\langle e_i, e_j \rangle) \neq 0 $$ for a basis $\{ e_1, \cdots, e_n \}$ of $V$.  Write $$ {\rm O}(V)=\{ \sigma \in {\rm GL}(V): \  \langle \sigma u, \sigma v \rangle  =  \langle u, v \rangle ,  \ \forall u, v \in V \} . $$
A non-degenerate two dimensional quadratic space $(V, \langle \  , \  \rangle)$ is called a hyperbolic plane if there is $v\in V$ with $v\neq 0$ such that $\langle  v , v \rangle =0$. 
If $f(x)$ is the characteristic polynomial of $\sigma\in {\rm O}(V)$ and $q(x)$ is a monic irreducible factor of $f(x)$, we write 
$$ V_{q(x), \sigma} =\{ v\in V: \ q(\sigma)^l v =0   \ \text{for sufficiently large integer $l$} \} . $$ 

A separable $k$-algebra is defined to be a finite dimensional semi-simple $k$-algebra such that the center of each simple component is a separable field extension of $k$. For a separable $k$-algebra $A$, the reduced trace of $A$ to $k$ is denoted by ${\rm Tr}_{A/k}$ (see \cite[\S 9]{Reiner}).  For a polynomial $f(x)$ of degree $n$ over $k$ with $f(0)\neq 0$, we define $$f^*(x)= f(0)^{-1} x^n f(x^{-1}) . $$

\section{Hermitian structures for non-reciprocal cases}

It is well-known that the classification of conjugacy classes of ${\rm GL}(V)$ is equivalent to classify all $k[x]$-module structure of $V$ via $\sigma \in {\rm GL}(V)$ for a finite dimensional  vector space $V$ over $k$.  When $V$ is a quadratic space over $k$, the classification of conjugacy classes of ${\rm O}(V)$ becomes classification of $k[x]$-module structure of $V$  via $\sigma \in {\rm O}(V)$ compatible with the quadratic structure. This leads to Milnor's classification in \cite[\S 3]{Mil}. A brief explanation for Case 3 in \cite[\S 3]{Mil} is given in the last paragraph in \cite[p.94]{Mil}. We will provide the similar hermitian structure for Case 3 in \cite[\S 3]{Mil} which makes this theory in a uniform way. 

\begin{dfn} Let $p(x)$ be an irreducible monic polynomial over $k$ with $p(0)\neq 0$ such that $p(x)\neq p^*(x)$ and $E=k[x]/(p(x)p^*(x))$. 

A free $E$-module $M$ of finite rank is called a hermitian space over $E$ if there is a map 
$$ h: M\times M \rightarrow E; \ (u, v) \mapsto h(u, v)$$  such that $h(u, v)$ is linear in $u$ and $h(v, u) = \iota (h(u, v))$ where $\iota$ is an involution of $E$ over $k$ induced by $x\mapsto x^{-1}$. 

A hermitian space $(M, h)$ over $E$ is called non-degenerate if  
$$ h(u, v)= 0 , \ \forall v\in M \ \ \ \Leftrightarrow \ \ \ u=0. $$

The unitary group $U(M, h)$ of $(M, h)$ is defined by 
$$ {\rm U}(M, h)= \{ \sigma \in \Aut_E(M): \ h(\sigma u, \sigma v) = h(u, v),  \ \forall u, v\in M \} $$
\end{dfn}

The following result gives the equivalent condition that a hermitian space is non-degenerate.

\begin{prop} \label{non-degenerate} Let $p(x)$ be an irreducible monic polynomial over $k$ with $p(0)\neq 0$ such that $p(x)\neq p^*(x)$ and $E=k[x]/(p(x)p^*(x))$. 
Suppose $(M, h)$ is a hermitian space over $E$. 

Then $(M, h)$ is non-degenerate if and only if $\det(h(e_i, e_j))_{1\leq i, j \leq n}$ is invertible in $E$ for a basis $\{ e_1, \cdots, e_n \}$ of $M$ over $E$.
\end{prop}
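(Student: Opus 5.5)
Write $H=(h(e_i,e_j))_{1\le i,j\le n}\in M_n(E)$ for the Gram matrix with respect to the chosen $E$-basis $\{e_1,\dots,e_n\}$ of the free module $M$. The plan is to reduce non-degeneracy to a statement about $E$-linear maps $E^n\to E^n$ and then use the structure of $E$. For $u=\sum_i a_ie_i$ with $a_i\in E$, linearity in the first variable gives
$$h(u,e_j)=\sum_i a_i h(e_i,e_j),$$
so the row vector of values $(h(u,e_j))_j$ is $aH$, where $a=(a_1,\dots,a_n)$. The map $v\mapsto h(u,v)$ is $\iota$-semilinear in $v$, since $h(u,cv)=\iota(h(cv,u))=\iota(c)h(u,v)$. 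Hence $h(u,v)=0$ for all $v\in M$ if and only if $h(u,e_j)=0$ for every $j$, that is, if and only if $aH=0$. So $(M,h)$ is non-degenerate exactly when the $E$-linear map $E^n\to E^n$, $a\mapsto aH$, is injective.

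It remains to show that, for the commutative ring $E$, injectivity of $a\mapsto aH$ is equivalent to $\det H\in E^\times$. If $\det H$ is a unit, then the adjugate identity $H\,{\rm adj}(H)=\det(H)I$ gives injectivity at once.

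For the converse, I would use the Chinese remainder theorem. The polynomials $p(x)$ and $p^*(x)$ are irreducible, monic and distinct, hence coprime, so
$$E\cong k[x]/(p(x))\times k[x]/(p^*(x))=K_1\times K_2,$$
a product of two fields. Everything decomposes componentwise: $H=(H_1,H_2)$ and $\det H=(\det H_1,\det H_2)$. Injectivity of $a\mapsto aH$ on $E^n=K_1^n\times K_2^n$ is equivalent to injectivity of $a_s\mapsto a_sH_s$ on each $K_s^n$. Over a field this means $\det H_s\neq 0$. Therefore injectivity holds if and only if both components of $\det H$ are nonzero, which is exactly the condition that $\det H$ is invertible in $E$.

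Alternatively, McCoy's theorem over any commutative ring gives the same converse, but the CRT route is more concrete. Finally, the condition does not depend on the choice of basis: under a change of basis with matrix $P$, the Gram matrix becomes $PH\,{}^t\iota(P)$, and its determinant changes by the unit factor $\det P\cdot\iota(\det P)$.

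The only mildly delicate point is the semilinearity in the second variable. It is what ensures that testing against the basis vectors $e_j$ suffices, and I expect no real obstacle beyond that.
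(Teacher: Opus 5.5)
Your proof is correct and is essentially the paper's argument: the paper likewise splits $E\cong k[x]/(p(x))\times k[x]/(p^*(x))$. It observes that a non-invertible $\det(h(e_i,e_j))$ has a vanishing component, builds the nonzero radical vector $\sum_i(\xi_i,0)e_i$ from a kernel vector in that factor, and uses invertibility of the determinant for the other direction. Your reduction to injectivity of $a\mapsto aH$ via semilinearity in the second variable, and your remark on basis independence, make explicit what the paper uses implicitly.
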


\begin{proof} ($\Rightarrow$) Suppose that $\det(h(e_i, e_j))_{1\leq i, j \leq n}$ is a zero divisor in $E$ for a basis $\{ e_1, \cdots, e_n \}$ of $M$ over $E$.  Since 
$$ E\cong k[x]/(p(x)) \times k[x]/(p^*(x)) , $$
one can write $$h(e_i, e_j)= (a_{ij}, b_{i j}) \ \ \text{ with } \ \ a_{ij}\in k[x]/(p(x)), \ b_{ij} \in  k[x]/(p^*(x))$$ for $1\leq i, j\leq n$. Since
$$\det(h(e_i, e_j))= (\det(a_{ij}), \det(b_{ij})) $$ is a zero divisor in $E$, one has $\det(a_{ij})=0$ or $\det(b_{ij})=0$. Without loss of generality, we assume $\det(a_{ij})=0$. Then  
there is $$(\xi_1, \cdots, \xi_n) \neq (0, \cdots, 0) \ \ \ \text{such that} \ \ \ (\xi_1, \cdots, \xi_n) \cdot (a_{ij}) = (0, \cdots, 0)$$ where $\xi_1, \cdots, \xi_n$ are all in $k[x]/(p(x))$.  Therefore
$$ u_0 = (\xi_1, 0) e_1 + \cdots + (\xi_n, 0) e_n  \neq 0. $$
For any $v= \alpha_1 e_1+ \cdots + \alpha_n e_n \in M$, one has 
$$ \begin{aligned} & h(u_0, v)  = ((\xi_1, 0), \cdots, (\xi_n, 0)) \cdot (h(e_i, e_j)) \cdot (\iota(\alpha_1), \cdots, \iota(\alpha_n))^{T} \\
& = ((\xi_1, \cdots, \xi_n)(a_{ij}), (0, \cdots, 0) (b_{ij}))\cdot (\iota(\alpha_1), \cdots, \iota(\alpha_n))^{T} =0 .
\end{aligned}$$ 
A contradiction is derived. 

($\Leftarrow$) Let $u=\beta_1 e_1+ \cdots + \beta_n e_n$ such that $h(u, v)=0$ for all $v\in M$. 
Then 
$$ (\beta_1, \cdots, \beta_n) (h(e_i, e_j))^T = (0, \cdots, 0) .$$ Since $\det(h(e_i, e_j))_{1\leq i, j \leq n}$ is invertible in $E$, one obtains  $$(\beta_1, \cdots, \beta_n) = (0, \cdots, 0) . $$ Therefore $u=0$ as required. 
\end{proof} 




This kind of non-degenerate hermitian space of the same rank is unique up to isomorphism. 

\begin{prop} \label{orth-base}  Let $p(x)$ be an irreducible monic polynomial over $k$ with $p(0)\neq 0$ such that $p(x)\neq p^*(x)$ and $E=k[x]/(p(x)p^*(x))$.  Assume that $char(k)\neq 2$. 

If $(M, h)$ is a non-degenerate hermitian space over $E$ with respect to the involution induced by $x\mapsto x^{-1}$, then $M$ has an orthogonal basis $\{ v_1, \cdots, v_n \}$ over $E$ such that $$ h(v_1, v_1) = \cdots = h(v_n, v_n) =1. $$ In particular,  any two non-degenerate hermitian spaces of the same rank over $E$ with respect to the involution induced by $x\mapsto x^{-1}$ are isomorphic. 
\end{prop}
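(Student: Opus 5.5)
The plan is to use the splitting $E\cong K\times K^*$ with $K=k[x]/(p(x))$ and $K^*=k[x]/(p^*(x))$. The involution $\iota$ induced by $x\mapsto x^{-1}$ swaps the two factors, since $x^{-1}$ is a root of $p^*$ whenever $x$ is a root of $p$. Because $p\neq p^*$, it gives a $k$-isomorphism $\iota: K\to K^*$.

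Write $\epsilon=(1,0)$ and $\epsilon^*=(0,1)$ for the idempotents, so that $\iota(\epsilon)=\epsilon^*$ and $M=\epsilon M\oplus \epsilon^*M$. Both summands are free of rank $n$, over $K$ and over $K^*$ respectively. Sesquilinearity gives
$$h(\epsilon u,\epsilon v)=\epsilon\,\iota(\epsilon)\,h(u,v)=\epsilon\epsilon^* h(u,v)=0,$$
and similarly for $\epsilon^*$. So $h$ is determined by the pairing $\epsilon M\times\epsilon^*M\to K$, $(u,w)\mapsto \epsilon h(u,w)$, which is $K$-linear in $u$ and $\iota$-semilinear in $w$.

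Using Proposition \ref{non-degenerate}, non-degeneracy of $h$ is equivalent to this pairing being perfect. Concretely, in any $E$-basis the Gram matrix $(h(e_i,e_j))$ has the form $(a_{ij},b_{ij})$ with $b_{ij}=\iota(a_{ji})$, so the determinant is invertible iff $\det(a_{ij})\neq 0$ in $K$.

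To build the basis, pick any $K$-basis $u_1,\dots,u_n$ of $\epsilon M$. By perfectness of the pairing (regarding $\epsilon^*M$ as a $K$-space via $\iota$), there is a dual family $w_1,\dots,w_n\in\epsilon^*M$ with $\epsilon h(u_i,w_j)=\delta_{ij}$, and the $w_j$ form a $K^*$-basis of $\epsilon^*M$. Set $v_i=u_i+w_i$. Then:
\begin{itemize}
\item the $v_i$ form an $E$-basis of $M$, since $\epsilon v_i=u_i$ and $\epsilon^* v_i=w_i$;
\item for the Gram matrix,
$$h(v_i,v_j)=h(u_i,w_j)+h(w_i,u_j),$$
where the first term lies in the $K$-component and equals $\delta_{ij}$ there, and the second equals $\iota(h(u_j,w_i))$, which is $\delta_{ij}$ in the $K^*$-component.
\end{itemize}
Hence $h(v_i,v_j)=(\delta_{ij},\delta_{ij})=\delta_{ij}$, which gives the orthonormal basis. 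Uniqueness follows because two such spaces of rank $n$ both have orthonormal bases, and matching those bases is an $E$-linear isometry.

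The only delicate step is the existence of the dual basis, i.e.\ translating invertibility of the Gram determinant into perfectness of the $K$-valued pairing. The Gram-matrix description above handles this directly: with $A=(a_{ij})$ invertible, take $w_j=\sum_l c_{lj}\,\epsilon^* e_l$, choosing the coefficients so that $A\,\iota^{-1}(C)=I$. Note that $char(k)\neq 2$ is not actually needed for this argument.
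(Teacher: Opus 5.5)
Your proof is correct, and it takes a genuinely different route from the paper's.

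\textbf{The paper's route.} The paper runs a Gram--Schmidt induction. If $h(e_1,e_1)$ is a unit, it splits off $Ee_1$ and recurses. Otherwise it computes in coordinates $K\times K$ to find $\lambda$ with $h(\lambda e_1+e_2,\lambda e_1+e_2)$ a unit. This is where $char(k)\neq 2$ is used, to ensure $K$ has enough elements. The induction produces an orthogonal basis $\eta_i$ with $h(\eta_i,\eta_i)=(\alpha_i,\alpha_i)$. Finally it rescales by $(\alpha_i,1)^{-1}$, using that in the split algebra every $\iota$-fixed unit has the form $\lambda\iota(\lambda)$.

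\textbf{Your route.} You work with the idempotents instead. The summands $\epsilon M$ and $\epsilon^*M$ are totally isotropic, and non-degeneracy is exactly perfectness of the pairing between them. A dual pair of bases $u_i$, $w_i$ then gives the orthonormal basis $u_i+w_i$ in one step.

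\textbf{What each approach buys.} Yours removes the case analysis, and it shows that the hypothesis $char(k)\neq 2$ is not needed for this statement. It also makes the subsequent corollary ${\rm U}(M,h)\cong{\rm GL}_n(K)$ transparent: an isometry is determined by its action $g$ on $\epsilon M$, and it must act on $\epsilon^*M$ by the contragredient of $g$. The paper's approach is the classical diagonalization procedure, which also applies in modified form to the non-split case $p=p^*$ treated by Milnor. In that case orthogonal bases still exist, but the diagonal entries cannot all be normalized to $1$, and they carry the determinant invariant.
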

\begin{proof} Let $\{ e_1, \cdots, e_n \}$ be a basis of $M$ over $E$. 

If $h(e_1, e_1)$ is invertible in $E$, then $M= E e_1 \perp N$ where 
$$ N= E(e_2- \frac{h(e_1, e_2)}{h(e_1, e_1)} e_1)+  \cdots + E(e_n- \frac{h(e_1, e_n)}{h(e_1, e_1)} e_1). $$
Since $\det (M) = \det (Ee_1) \cdot \det (N)$, one obtains that $N$ is also a non-degenerate hermitian space over $E$ by Proposition \ref{non-degenerate}. The result follows from induction.

Otherwise, one has that $h(e_1, e_1)$ is a zero divisor of $E$. Then there is $2\leq i_0\leq n$ such that $h(e_1, e_{i_0})\neq 0$ by the assumption that $\det (h(e_i, e_j))$ is invertible in $E$. Without loss of generality, we simply assume that $h(e_1, e_2)\neq 0$.  

Claim: There is $\lambda\in E$ such that $h(\lambda e_1+ e_2, \lambda e_1+  e_2)$ is invertible in $E$. 

Indeed, since 
$$ E\cong k[x]/(p(x)) \times k[x]/(p^*(x)) \cong K\times K $$
as $k$-algebras where $K=k(\alpha)$ and $\alpha$ is a root of $p(x)$, the involution of $E$ induced by $x\mapsto x^{-1}$ gives the involution of $K\times K$ by interchanging two coordinates.  Under this identification, one can write 
$$ h(e_1, e_1)= (a_1, a_2), \ \ \ h(e_1, e_2) = (c_1, c_2) \ \ \ \text{and} \ \ \ h(e_2, e_2) =(b_1, b_2)$$  
where $a_i, b_i, c_i\in K$ for $i=1, 2$.  Write $\lambda= (\lambda_1, \lambda_2)$. Then 
$$ h(\lambda e_1+ e_2, \lambda e_1+ e_2)=(a_1\lambda_1\lambda_2+c_1\lambda_1+c_2\lambda_2+b_1, a_2\lambda_1\lambda_2+ c_1\lambda_1+c_2\lambda_2+b_2).$$
Since $h(e_1, e_1)$ is a zero divisor of $E$, one can simply assume that $a_1=0$.  By $h(e_1, e_2)= (c_1, c_2)\neq (0, 0)$, the linear form $c_1\lambda_1+c_2\lambda_2$ is not zero. 

When $a_2\neq 0$, one considers the equation 
$$ c_1\lambda_1+c_2\lambda_2+b_1=1 .$$  Then the equation 
$$ a_2\lambda_1\lambda_2+ c_1\lambda_1+c_2\lambda_2+b_2=0 $$ has at most two solutions in $K$. Since $char(k)\neq 2$, the field $K$ contains more than two elements. Therefore there are $\lambda_1, \lambda_2$ in $K$ such that 
$$ a_2\lambda_1\lambda_2+ c_1\lambda_1+c_2\lambda_2+b_2 \neq 0 . $$ This implies that $h(\lambda e_1+ e_2, \lambda e_1+  e_2)$ is invertible in $E$.

When $a_2=0$, there are $\lambda_1, \lambda_2\in K$ such that $$c_1\lambda_1+c_2\lambda_2 \in K\setminus \{-b_1, -b_2 \}$$ by $char(k)\neq 2$. Therefore the claim follows.

By repeating the beginning arguments to the basis $\{ \lambda e_1+ e_2, e_1, e_3, \cdots, e_n \}$ of $M$, one obtains an orthogonal basis $\{\eta_1, \cdots, \eta_n \}$. Write
$$ h(\eta_1, \eta_1)= (\alpha_1, \alpha_1), \cdots, h(e_n, e_n)=(\alpha_n, \alpha_n) $$ with $\alpha_1, \cdots, \alpha_n \in K$. Then 
$$ v_1= (\alpha_1, 1)^{-1} \eta_1, \cdots, v_n = (\alpha_n, 1)^{-1} \eta_n $$ is as required. 
\end{proof}

\begin{cor} If $(M, h)$ is the non-degenerate hermitian space over $E$ in the above Proposition \ref{orth-base}, then 
$${\rm U}(M, h) \cong {\rm GL}_n (K) $$ 
where $n={\rm rank}_E (M)$ and $K=k(\alpha)$ for a root  $\alpha$ of $p(x)$.
\end{cor}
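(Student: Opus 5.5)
By Proposition \ref{orth-base}, $M$ has an orthonormal basis $\{v_1,\dots,v_n\}$ over $E$ with $h(v_i,v_j)=\delta_{ij}$. In this basis, an $E$-linear automorphism $g$ of $M$ is a matrix $A\in {\rm GL}_n(E)$, and $g\in {\rm U}(M,h)$ exactly when $A^T\,\iota(A)=I_n$. This follows by expanding $h(gu,gv)$ in coordinates, exactly as in the displayed computation in the proof of Proposition \ref{non-degenerate}: the Gram matrix of $h$ is $I_n$, and $h$ is linear in the first variable and $\iota$-semilinear in the second.

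Next use the identification $E\cong K\times K$ from the proof of Proposition \ref{orth-base}, under which $\iota$ swaps the two coordinates. Then ${\rm GL}_n(E)\cong {\rm GL}_n(K)\times{\rm GL}_n(K)$, writing $A=(A_1,A_2)$. Here $\iota(A)=(A_2,A_1)$, since $\iota$ is applied entrywise and $\iota(a,b)=(b,a)$. The unitarity condition therefore reads
\[ (A_1^T A_2,\; A_2^T A_1)=(I_n,I_n). \]
The second component is the transpose of the first, so the condition reduces to the single equation $A_2=(A_1^T)^{-1}$.

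Define $\phi:{\rm U}(M,h)\to{\rm GL}_n(K)$ by $(A_1,A_2)\mapsto A_1$. It is a group homomorphism because multiplication in ${\rm GL}_n(E)$ is componentwise. It is injective since $A_2$ is determined by $A_1$. It is surjective since for any $B\in{\rm GL}_n(K)$ the pair $(B,(B^T)^{-1})$ satisfies the unitarity condition. Hence ${\rm U}(M,h)\cong{\rm GL}_n(K)$.

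The only step that needs care is checking that the entrywise action of $\iota$ on matrices really corresponds to swapping the factors of ${\rm GL}_n(K)\times{\rm GL}_n(K)$. Here the identification $k[x]/(p^*(x))\cong K$ must be chosen, as in the proof of Proposition \ref{orth-base}, via $x\mapsto \alpha^{-1}$, so that $\iota$ becomes the coordinate swap. Once that identification is fixed, the rest is formal.
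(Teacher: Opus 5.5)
Your proof is correct and follows the same route as the paper: take the orthonormal basis from Proposition \ref{orth-base}, write unitarity as a matrix identity over $E\cong K\times K$ with $\iota$ swapping the factors, note that it forces the second component to equal $(A_1^{T})^{-1}$, and project to the first factor. Your column convention actually makes $\phi$ a homomorphism. The paper's row convention $\sigma(v_i)=\sum_j (a_{ij},b_{ij})v_j$ makes $\sigma\mapsto(a_{ij})$ strictly an anti-homomorphism, which is harmless because ${\rm GL}_n(K)$ is isomorphic to its opposite group.
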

\begin{proof}  Suppose that  $\{ v_1, \cdots, v_n \}$ is an orthogonal basis over $E$ such that $ h(v_1, v_1) = \cdots = h(v_n, v_n) =1$. Let $\sigma  \in \Aut_E(M)$ and write 
$$ \sigma(v_i) = \sum_{j=1}^n (a_{i j}, b_{i j})\cdot  v_j $$
where $a_{ij}, b_{ij}\in K$ for $1\leq i\leq n$.  Then 
$$ \sigma\in {\rm U}(M, h) \ \ \ \Leftrightarrow \ \ \ ((a_{ij}, b_{i j})) \cdot ((b_{j i}, a_{j i}) )= I_n .$$
This is equivalent to $(a_{ij})=(b_{ji})^{-1} \in {\rm GL}_n (K) $.  The map $\sigma \mapsto (a_{ij})$ gives the desired isomorphism.
\end{proof}

One also has an analogue of \cite[Theorem 3.2]{Mil} for Case 3 in \cite[\S 3]{Mil}. 

\begin{thm}  \label{splitting} Let $(V,  \langle \  , \  \rangle)$ be a non-degenerate quadratic space over a field of $k$ with $char(k)\neq 2$ and $f(x)$ be the characteristic polynomial of $\sigma\in {\rm O}(V)$.  If $\tilde{p}(x)$ is a monic irreducible factor of $f(x)$ with
$ \tilde{p}(x) \neq \tilde{p}^*(x) $,  then there is a direct sum decomposition
$$ V_{\tilde{p}(x), \sigma} = V_{\tilde{p}(x), \sigma}^ {(1)}\oplus \cdots \oplus V_{\tilde{p}(x), \sigma}^ {(i)}  \cdots $$
where $V_{\tilde{p}(x), \sigma}^ {(i)}$ is a free $k[x]/(\tilde{p}(x)^i)$-module and 
$$ V_{\tilde{p}^*(x), \sigma} = V_{\tilde{p}^*(x), \sigma}^ {(1)}\oplus \cdots \oplus V_{\tilde{p}^*(x), \sigma}^ {(i)}  \cdots $$ 
where $V_{\tilde{p}^*(x), \sigma}^ {i}$ is  a free $k[x]/(\tilde{p}^*(x)^i)$-module  such that 
$$ V_{\tilde{p}(x), \sigma} \oplus V_{\tilde{p}^*(x), \sigma}  = (V_{\tilde{p}(x), \sigma}^ {(1)}\oplus V_{\tilde{p}^*(x), \sigma}^ {(1)}) \perp \cdots \perp (V_{\tilde{p}(x), \sigma}^ {(i)} \oplus V_{\tilde{p}^*(x), \sigma}^ {(i)}) \perp \cdots $$
where $V_{\tilde{p}(x), \sigma}^ {(i)} \oplus V_{\tilde{p}^*(x), \sigma}^ {(i)}$ is a free $k[x]/(\tilde{p}(x)^i \tilde{p}^*(x)^i)$-module. 
\end{thm}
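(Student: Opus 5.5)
Write $p=\tilde p$, $W=V_{p,\sigma}\oplus V_{p^*,\sigma}$ and $N=\dim_k W$. We proceed in four steps: reduce to $W$, reduce to a hermitian module over $E=k[x]/(p(x)p^*(x))$, split that module using the hermitian theory above, and then pull the splitting back.

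\textbf{Step 1: reduce to $W$.} Since $\sigma$ is an isometry, $\langle q(\sigma)u,v\rangle=\langle u,q(\sigma^{-1})v\rangle$. Hence $\langle V_{q,\sigma},V_{r,\sigma}\rangle=0$ unless $r$ is associate to $q^*$. So $W$ is orthogonal to all other primary components and is therefore non-degenerate. Each of $V_{p,\sigma}$ and $V_{p^*,\sigma}$ is totally isotropic. The pairing $V_{p,\sigma}\times V_{p^*,\sigma}\to k$ is perfect and satisfies $\langle \sigma u,v\rangle=\langle u,\sigma^{-1}v\rangle$. From here on we work only with $W$.

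\textbf{Step 2: pass to a hermitian module.} Let $R$ be the semilocal ring $k[x]_{(p)}\times k[x]_{(p^*)}$, or more simply $A=k[x]/(p^N p^{*N})$. The ring $A$ acts on $W$ and carries the involution $\iota$ induced by $x\mapsto x^{-1}$. This involution is well defined because $\iota$ maps $(p^Np^{*N})$ into itself up to a unit.

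Following Milnor, choose a linear form $\ell:A\to k$ such that $(a,b)\mapsto \ell(a\iota(b))$ is non-degenerate. A reduced-trace type functional works: $A$ is a Frobenius algebra, so such an $\ell$ exists. Then there is a unique form $h:W\times W\to A$, linear in the first variable and $\iota$-hermitian, with $\langle u,v\rangle=\ell(h(u,v))$. The hermitian property follows from the symmetry of $\langle\ ,\ \rangle$ together with $\ell\circ\iota=\ell$; we arrange the latter by averaging $\ell$.

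Under $A\cong k[x]/(p^N)\times k[x]/(p^{*N})$, with $\iota$ swapping the two factors, $h$ amounts to a perfect pairing $V_{p,\sigma}\times V_{p^*,\sigma}\to k[x]/(p^N)$. This pairing is sesquilinear with respect to the isomorphism $k[x]/(p^{*N})\cong k[x]/(p^N)$ induced by $\iota$.

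\textbf{Step 3: split the hermitian module.} Decompose $V_{p,\sigma}=\bigoplus_i X_i$ with $X_i$ free over $k[x]/(p^i)$; this is just the structure theorem over the PID $k[x]_{(p)}$. We now build, by induction on the largest exponent $i$, a matching summand $Y_i\subset V_{p^*,\sigma}$ such that $X_i\oplus Y_i$ is free over $k[x]/(p^ip^{*i})$ and orthogonal to the rest.

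Let $i$ be maximal and let $e_1,\dots,e_r$ be a basis of $X_i$. The elements $h(x,\cdot)$ with $x\in X_i$ induce, via duality, a surjection $V_{p^*,\sigma}\to \mathrm{Hom}(X_i,\cdot)$. Its target is free of rank $r$ over $k[x]/(p^{*i})$ after twisting by $\iota$. Choose $f_1,\dots,f_r\in V_{p^*,\sigma}$ with $h(e_a,f_b)=\delta_{ab}$ modulo the exponent-$i$ truncation.

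Then the $f_b$ generate a free $k[x]/(p^{*i})$-submodule $Y_i$, and $U=X_i\oplus Y_i$ has invertible Gram matrix in $k[x]/(p^ip^{*i})$, in the sense of Proposition \ref{non-degenerate}. Consequently $W=U\perp U^\perp$, and $U^\perp$ is again $\sigma$-stable with smaller exponents. Induction then gives the stated decomposition.

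This step is the main obstacle. One must show that the $f_b$ can be chosen so that $\langle e_a,f_b\rangle$-duality forces exact annihilator $p^{*i}$, and that the Gram determinant is a unit. The key fact is maximality of $i$: $p^{i-1}(\sigma)X_i\neq0$ and is paired perfectly with $V_{p^*,\sigma}/p^*V_{p^*,\sigma}$-type quotients. I would argue this with Milnor's pairing $\langle p(\sigma)^{i-1}u,v\rangle$ on $X_i/pX_i$ versus $V_{p^*}^{(i)}/p^*V_{p^*}^{(i)}$, which is perfect by the same argument as \cite[Theorem 3.4]{Mil}.

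\textbf{Step 4: pull back.} Finally, freeness of $V^{(i)}_p\oplus V^{(i)}_{p^*}$ over $k[x]/(p^ip^{*i})$ follows from the Chinese remainder theorem, since $X_i$ and $Y_i$ are free of the same rank. Orthogonality of the summands is exactly the orthogonal splitting $W=U\perp U^\perp$ transported through $\ell$.
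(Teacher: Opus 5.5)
Your proof has a genuine gap: Step 3 carries the entire content of the theorem, and its key claims are asserted rather than proved, as you note yourself. Three things are missing.

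(a) You never show that $p^*(\sigma)^i$ kills $V_{p^*,\sigma}$ when $i$ is the top exponent of $V_{p,\sigma}$. Without this, the span $Y_i$ of the $f_b$ need not be free over $k[x]/(p^{*i})$, and then $X_i\oplus Y_i$ is degenerate. Indeed, a nonzero $y\in p^*(\sigma)^iY_i$ is orthogonal to $X_i$: moving $p^*(\sigma)^i$ across gives a scalar times $\sigma^{-i\deg p}p(\sigma)^i$, which kills $X_i$. It is also orthogonal to $Y_i$ by isotropy.

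(b) The ``surjection via duality'' $V_{p^*,\sigma}\to\mathrm{Hom}(X_i,\cdot)$ needs more than non-degeneracy of $h$. Non-degeneracy only makes $v\mapsto h(\cdot,v)$ injective into $\mathrm{Hom}_{A_1}(V_{p,\sigma},A_1)$, where $A_1=k[x]/(p^N)$. Surjectivity needs a dimension count using that $A_1$ is Frobenius.

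(c) Your proposed repair is circular. You pair $X_i/pX_i$ against $V^{(i)}_{p^*}/p^*V^{(i)}_{p^*}$ ``as in Milnor's Theorem 3.4'', but $V^{(i)}_{p^*}$ is the summand you are constructing, and that non-degeneracy argument presupposes the orthogonal splitting.

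The paper fills exactly these holes by elementary means, with no hermitian form over the non-reduced ring $A$; your Step 2 is not needed.

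First, the paper shows that the annihilator exponents $s$ of $V_{p,\sigma}$ and $t$ of $V_{p^*,\sigma}$ coincide. If $s>t$, pick $e$ with $p(\sigma)^{s-1}e\neq 0$ and $f\in V_{p^*,\sigma}$ with $\langle p(\sigma)^{s-1}e,f\rangle=1$. Moving $p(\sigma)^{s-1}$ across gives $\langle e, c\,\sigma^{(1-s)\deg p}p^*(\sigma)^{s-1}f\rangle$ with $c\in k^\times$, and this vanishes since $s-1\geq t$.

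Second, it takes \emph{arbitrary} free top summands $V^{(s)}_{p}$, $V^{(s)}_{p^*}$, with complements $V'_p$, $V'_{p^*}$ killed by $p^{s-1}$ and $p^{*(s-1)}$. It shows no nonzero $v_0\in V^{(s)}_p$ is orthogonal to $V^{(s)}_{p^*}$. If $(p^m)$ is the annihilator of $v_0$, then $p(\sigma)^{m-1}v_0=p(\sigma)^{s-1}w_0\neq 0$ for some $w_0\in V^{(s)}_p$. This vector is orthogonal to $V^{(s)}_{p^*}$ by $\sigma$-stability, and to $V'_{p^*}$ because $p^*(\sigma)^{s-1}V'_{p^*}=0$. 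Hence it is orthogonal to all of $V$, a contradiction.

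If you prefer to keep your duality route, prove that $v\mapsto h(\cdot,v)$ is a semilinear isomorphism $V_{p^*,\sigma}\to\mathrm{Hom}_{A_1}(V_{p,\sigma},A_1)$. It is injective, and $\dim_k\mathrm{Hom}_{A_1}(M,A_1)=\dim_kM$ for a Frobenius algebra. This gives (a) and (b) at once. A length count then makes $Y_i$ free of rank $r$ with $X_i\times Y_i\to A_1$ perfect.

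Two smaller points:
\begin{enumerate}
\item Averaging $\ell$ can destroy non-degeneracy; for instance, if $\ell\circ\iota=-\ell$ the average is $0$. Instead, define $\ell$ on one factor and transport it by $\iota$.
\item Proposition \ref{non-degenerate} is stated only for the reduced ring $k[x]/(pp^*)$, so it cannot be cited for $k[x]/(p^ip^{*i})$.
\end{enumerate}
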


\begin{proof}  Let 
$$ {\rm Ann}_{k[x]}(V_{\tilde{p}(x), \sigma})= \{ a(x)\in k[x]:   a(x) V_{\tilde{p}(x), \sigma} =0 \} =(\tilde{p}(x)^s) $$
for some positive integer $s$ and 
$$ {\rm Ann}_{k[x]}(V_{\tilde{p}^*(x), \sigma})= \{ a(x)\in k[x]:   a(x) V_{\tilde{p}^*(x), \sigma} =0 \} =(\tilde{p}^*(x)^t) $$
for some positive integer $t$. Then $s=t$. Indeed, suppose $s>t$. Since there is $e\in V_{\tilde{p}(x), \sigma} $ such that $\tilde{p}(x)^{s-1} e\neq 0$ and $ V_{\tilde{p}(x), \sigma} \oplus V_{\tilde{p}^*(x), \sigma} $ is a non-degenerate subspace of $V$, there is $f\in V_{\tilde{p}^*(x), \sigma}$ such that $$ \langle  \tilde{p}(x)^{s-1} e ,  f \rangle =1 $$ by \cite[Lemma 3.1]{Mil}. Then 
$$1 = \langle  \tilde{p}(x)^{s-1} e ,  f \rangle =\langle  e ,  \tilde{p}^*(x)^{(s-1)} \tilde{p}^*(0)^{s-1} \sigma^{(1-s)\deg(\tilde{p}(x))}(f) \rangle = \langle  e ,  0 \rangle =0 $$
and a contradiction is derived. 

By the structure theorem of finitely generated modules over PID, there are a free $k[x]/(\tilde{p}(x)^s)$-module $V_{\tilde{p}(x), \sigma}^ {(s)}$ and a free $k[x]/(\tilde{p}^*(x)^s)$-module $V_{\tilde{p}^*(x), \sigma}^ {(s)} $ such that 
$$ V_{\tilde{p}(x), \sigma} = V_{\tilde{p}(x), \sigma}' \oplus V_{\tilde{p}(x), \sigma}^ {(s)}  \ \ \ \text{and} \ \ \ 
 V_{\tilde{p}^*(x), \sigma} = V_{\tilde{p}^*(x), \sigma}'\oplus V_{\tilde{p}^*(x), \sigma}^ {(s)} $$ 
where  $V_{\tilde{p}(x), \sigma}'$ and $V_{\tilde{p}^*(x), \sigma}' $ are submodules of $ V_{\tilde{p}(x), \sigma} $ and $V_{\tilde{p}^*(x), \sigma}$ respectively satisfying $$\tilde{p}(x)^{s-1} V_{\tilde{p}(x), \sigma}'=\tilde{p}^*(x)^{s-1} V_{\tilde{p}^*(x), \sigma}' =0 . $$  Then $V_{\tilde{p}(x), \sigma}^ {(s)} \oplus V_{\tilde{p}^*(x), \sigma}^ {(s)} $ is a non-degenerate quadratic space over $k$. Indeed, suppose there is  $$v_0\in V_{\tilde{p}(x), \sigma}^ {(s)}  \ \ \text{ with }  \ \ v_0\neq 0 \ \ \text{ such that } \ \ \langle v_0, v \rangle =0, \  \forall  v\in V_{\tilde{p}^*(x), \sigma}^ {(s)} . $$ Write 
$$ (\tilde{p}(x)^m)={\rm Ann}_{k[x]} (v_0)= \{ a(x)\in k[x]: a(x) v_0=0 \}. $$
Then there is $w_0\in V_{\tilde{p}(x), \sigma}^ {(s)} $ such that $\tilde{p}(x)^{m-1}v_0= \tilde{p}(x)^{s-1}w_0\neq 0$.  Since $$\tilde{p}(\sigma^{-1})^{m-1} v\in  V_{\tilde{p}^*(x), \sigma}^ {(s)}  \  \ \ \text{for} \ \ \  v\in V_{\tilde{p}^*(x), \sigma}^ {(s)} ,$$ one obtains
$$\langle \tilde{p}(x)^{m-1}v_0, v \rangle = \langle v_0,  \tilde{p}(\sigma^{-1})^{m-1} v \rangle =0 $$ for all $v\in V_{\tilde{p}^*(x), \sigma}^ {(s)}$.  Since $$\tilde{p}^*(0)^{s-1} \sigma^{(1-s)\deg(\tilde{p}(x))}(w) \in V_{\tilde{p}^*(x), \sigma}' $$ for any $w\in V_{\tilde{p}^*(x), \sigma}' $, one has 
$$ \begin{aligned} & \langle \tilde{p}(x)^{m-1}v_0, w \rangle = \langle \tilde{p}(x)^{s-1}w_0, w \rangle \\
= \ & \langle w_0, \tilde{p}^*(x)^{s-1} (\tilde{p}^*(0)^{s-1} \sigma^{(1-s)\deg(\tilde{p}(x))}(w)) \rangle= \langle w_0, 0\rangle =0 \end{aligned} $$
for any $w\in V_{\tilde{p}^*(x), \sigma}' $. Since $V$ is non-degenerate, one concludes that  $$\tilde{p}(x)^{m-1}v_0= \tilde{p}(x)^{s-1}w_0= 0$$ by \cite[Lemma 3.1]{Mil}.  A contradiction is derived. Therefore 
$$ V_{\tilde{p}(x), \sigma} \oplus V_{\tilde{p}^*(x), \sigma}  = (V_{\tilde{p}(x), \sigma}^ {(s)}\oplus V_{\tilde{p}^*(x), \sigma}^ {(s)}) \perp (W_{\tilde{p}(x), \sigma} \oplus W_{\tilde{p}^*(x), \sigma} ) $$
where 
$$ W_{\tilde{p}(x), \sigma}=\{ v\in V_{\tilde{p}(x), \sigma}: \ \langle v, V_{\tilde{p}^*(x), \sigma}^ {(s)} \rangle =0 \}$$ and $$W_{\tilde{p}^*(x), \sigma} )= \{ v\in  V_{\tilde{p}^*(x), \sigma} : \ \langle v, V_{\tilde{p}(x), \sigma}^ {(s)} \rangle =0 \} . $$
The result follows from induction on $W_{\tilde{p}(x), \sigma} \oplus W_{\tilde{p}^*(x), \sigma}$.  \end{proof}

The following lemma is an analogue of \cite[Lemma 1.1]{Mil} for Case 3 in \cite[\S 3]{Mil} which is also given in \cite[Remark 2.10]{CHLX}. For convenience, we provide a proof which is close to the original proof \cite[Lemma 1.1]{Mil}.

\begin{lem} \label{hermitian} Let $(V,  \langle \  , \  \rangle)$ be a non-degenerate quadratic space over a field of $k$ with $char(k)\neq 2$. Suppose $\sigma \in {\rm O}(V)$ such that the minimal polynomial of $\sigma$ is $\tilde{p}(x) \tilde{p}^*(x)$ where $\tilde{p}(x)$ is a monic irreducible separable polynomial with $\tilde{p}(x) \neq \tilde{p}^*(x) $. 

If $E=k[x]/(\tilde{p}(x) \tilde{p}^*(x))$ and $V$ is $E$-module via $\sigma$, then $V$ over $E$ admits one and only one hermitian inner product $h$ with respect to the involution $\iota$ of $E$ over $k$ induced by $x\mapsto x^{-1}$ such that $$ \langle u  ,  v  \rangle = {\rm Tr}_{E/k} (h(u, v)) \ \ \ \forall \ u, v\in V. $$
\end{lem}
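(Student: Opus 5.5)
Following Milnor's proof of \cite[Lemma 1.1]{Mil}, the plan rests on the non-degeneracy of the trace form of $E$ over $k$. Since $\tilde{p}(x)$ is separable and $\tilde{p}(x)\neq\tilde{p}^*(x)$, the polynomials $\tilde{p}$ and $\tilde{p}^*$ are coprime. Hence
$$E\cong k[x]/(\tilde{p}(x))\times k[x]/(\tilde{p}^*(x))\cong K\times K$$
is a separable $k$-algebra, and the $k$-bilinear form $(a,b)\mapsto {\rm Tr}_{E/k}(ab)$ on $E$ is non-degenerate. Consequently every $k$-linear functional $\ell:E\to k$ can be written uniquely as $\ell(a)={\rm Tr}_{E/k}(ca)$ for some $c\in E$. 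Note also that $V$ is a free $E$-module, because the minimal polynomial $\tilde{p}\tilde{p}^*$ forces both $V_{\tilde{p}(x),\sigma}$ and $V_{\tilde{p}^*(x),\sigma}$ to have the same dimension by Theorem \ref{splitting} with $s=1$; in any case freeness is not needed for the construction of $h$.

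\textbf{Construction of $h$.} For fixed $u,v\in V$, consider the $k$-linear functional $a\mapsto \langle a u, v\rangle$ on $E$, where $a$ acts on $V$ through $\sigma$. By the nondegeneracy of the trace form there is a unique $h(u,v)\in E$ with
$$\langle a u, v\rangle = {\rm Tr}_{E/k}(a\,h(u,v)) \quad \text{for all } a\in E.$$
Taking $a=1$ gives the required identity $\langle u,v\rangle={\rm Tr}_{E/k}(h(u,v))$.

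\textbf{Verification of the axioms.} Each property follows from uniqueness of the representing element.
\begin{enumerate}
\item $E$-linearity in $u$: the functional attached to $(bu,v)$ is $a\mapsto {\rm Tr}(ab\,h(u,v))$, so $h(bu,v)=b\,h(u,v)$. Additivity is checked in the same way.
\item Behaviour of the involution: since $\sigma$ is an isometry, $\langle \sigma u,v\rangle=\langle u,\sigma^{-1}v\rangle$. It follows that $\langle a u,v\rangle=\langle u,\iota(a)v\rangle$ for all $a\in E$.
\item Trace invariance: ${\rm Tr}_{E/k}\circ\iota={\rm Tr}_{E/k}$. This holds because $\iota$ is a $k$-algebra automorphism; concretely, it swaps the two factors of $K\times K$, and the trace of $E$ is the sum of the traces of the two factors.
\item Hermitian symmetry: using the last two points,
$$\langle a v,u\rangle=\langle v,\iota(a)u\rangle=\langle \iota(a)u,v\rangle={\rm Tr}(\iota(a)h(u,v))={\rm Tr}(a\,\iota(h(u,v))).$$
By uniqueness, $h(v,u)=\iota(h(u,v))$.
\end{enumerate}

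\textbf{Uniqueness of $h$.} Suppose $h'$ is another hermitian form satisfying $\langle u,v\rangle={\rm Tr}(h'(u,v))$. Linearity in the first variable gives
$${\rm Tr}(a\,h'(u,v))={\rm Tr}(h'(au,v))=\langle au,v\rangle$$
for all $a\in E$. Nondegeneracy of the trace form then forces $h'(u,v)=h(u,v)$.

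\textbf{Main obstacle.} The only substantive point is the nondegeneracy of ${\rm Tr}_{E/k}$ on $E$ (together with the compatibility ${\rm Tr}\circ\iota={\rm Tr}$). I would prove it by reducing to the separable field $K$ via the product decomposition. Here ${\rm Tr}_{E/k}$ is interpreted as the sum of the field traces ${\rm Tr}_{K/k}$ on the two factors, which is consistent with the reduced trace of \cite[\S 9]{Reiner}.
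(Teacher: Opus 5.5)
Your proposal is correct and follows essentially the same route as the paper (which is Milnor's argument for \cite[Lemma 1.1]{Mil}): you represent $\lambda\mapsto\langle \lambda u,v\rangle$ through the non-degenerate trace form of the separable algebra $E$, deduce $h(v,u)=\iota(h(u,v))$ from $\langle \lambda u,v\rangle=\langle u,\iota(\lambda)v\rangle$ and ${\rm Tr}_{E/k}\circ\iota={\rm Tr}_{E/k}$, and get uniqueness from $E$-linearity together with non-degeneracy of the trace form. The paper's proof also records one line you omitted, namely that $h$ is non-degenerate: if $h(u,v)=0$ for all $v$, then $\langle u,v\rangle={\rm Tr}_{E/k}(h(u,v))=0$ for all $v$, so $u=0$; add this if ``hermitian inner product'' is read as requiring non-degeneracy.
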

\begin{proof} For any $u, v\in V$, one can define a linear map 
$$ l_{u, v}: \ E  \longrightarrow k; \ \lambda \mapsto  \langle \lambda u, v \rangle  $$ over $k$. Since $E/k$ is an etale algebra, the reduced trace map
$$ E \times E \longrightarrow k; \ \ (\alpha, \beta) \mapsto {\rm Tr}_{E/k} (\alpha \beta) $$ is non-degenerate by \cite[(9.9) Theorem]{Reiner}.
 Then there is a unique $$h(u, v)\in E \ \ \text{such that} \ \  
 l_{u, v} (\lambda)= \langle \lambda u, v \rangle = {\rm Tr}_{E/k} (h(u, v)  \lambda) $$ for all $\lambda \in E$. In particular, $\langle u, v \rangle = {\rm Tr}_{E/k} (h(u, v))$. 

We claim that 
$$ h: \ V \times V  \longrightarrow E ; \ (u, v)\mapsto h(u, v) $$ is a hermitian form over $E$ with respect to the 
 involution $\iota$ induced by sending $x\mapsto x^{-1}$. First of all, $V$ is a free $E$-module of finite rank by Theorem \ref{splitting}.  Since any $\lambda\in E$ can be written in terms of $x$ over $k$, one has
$$ \langle \lambda u, v \rangle = \langle u, \iota (\lambda) v \rangle = \langle \iota (\lambda) v, u \rangle  $$
for all $u, v\in V$.  This implies that 
$$ {\rm Tr}_{E/k} (h(u, v)  \lambda) = {\rm Tr}_{E/k} (h(v, u) \iota( \lambda)) = {\rm Tr}_{E/k} (\iota (h(v, u))  \lambda) $$ for all $\lambda\in E$. Therefore $h(u, v)  = \iota (h(v, u))$ as desired. Since $(V,  \langle \  , \  \rangle)$ is non-degenerate, one concludes that $h$ is also non-degenerate. 

Suppose there is another hermitian form $h'$ on $V$ over $E$ such that $$\langle u, v \rangle = {\rm Tr}_{E/k} (h(u, v))={\rm Tr}_{E/k} (h'(u, v))$$ for all $u, v\in V$. Then 
$${\rm Tr}_{E/k} (\lambda h(u, v))={\rm Tr}_{E/k} (\lambda h'(u, v))$$ for any $\lambda\in E$. This implies that $h=h'$ and the uniqueness property for $h$ holds. 
\end{proof}

The following result is an analogue of \cite[Theorem 3.3]{Mil} for Case 3 in \cite[\S 3]{Mil}. 

\begin{thm} \label{complete invariant} Let $(V,  \langle \  , \  \rangle)$ be a non-degenerate quadratic space over a field of $k$ with $char(k)\neq 2$ and $f(x)$ be the characteristic polynomial of $\sigma\in {\rm O}(V)$.  Suppose that  $\tilde{p}(x)$ is a monic irreducible separable factor of $f(x)$ with 
$ \tilde{p}(x) \neq \tilde{p}^*(x) $. If
$$ V_{\tilde{p}(x), \sigma} \oplus V_{\tilde{p}^*(x), \sigma}  = (V_{\tilde{p}(x), \sigma}^ {(1)}\oplus V_{\tilde{p}^*(x), \sigma}^ {(1)}) \perp \cdots \perp (V_{\tilde{p}(x), \sigma}^ {(i)} \oplus V_{\tilde{p}^*(x), \sigma}^ {(i)}) \perp \cdots $$
is an orthogonal splitting in Theorem \ref{splitting}, then the space 
$$ H_{\sigma}^{(i)}= (V_{\tilde{p}(x), \sigma}^ {(i)}/\tilde{p}(x) V_{\tilde{p}(x), \sigma}^ {(i)})  \oplus (V_{\tilde{p}^*(x), \sigma}^ {(i)} / \tilde{p}^*(x) V_{\tilde{p}^*(x), \sigma}^ {(i)} )$$ over $E=k[x]/(\tilde{p}(x) \tilde{p}^*(x))$ admits a unique hermitian inner product $h_{\sigma}^{(i)}$ with respect to the involution $\iota$ induced by $x\mapsto x^{-1}$ satisfying 
$$ \langle \tilde{p}(x)^{i-1} u +u^* , \tilde{p}(x)^{i-1} v + v^* \rangle = {\rm Tr}_{E/k} (h_{\sigma}^{(i)} (\overline{u+u^*}, \overline{v+v^*})) $$ for $u, v \in V_{\tilde{p}(x), \sigma}^ {(i)} $ and $u^*, v^* \in V_{\tilde{p}^*(x), \sigma}^ {(i)}$ with $i=1, 2, \cdots$.  

Moreover, the sequence of isomorphism classes of the hermitian spaces $(H_{\sigma}^{(i)}, h_{\sigma}^{(i)})$ forms a complete invariant of pair $(V_{\tilde{p}(x), \sigma} \oplus V_{\tilde{p}^*(x), \sigma} , \sigma) $. 
\end{thm}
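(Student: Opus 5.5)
I will follow the pattern of Milnor's proof of \cite[Theorem 3.3]{Mil}, with Lemma \ref{hermitian} playing the role of \cite[Lemma 1.1]{Mil}. Fix $i$ and write $W=V_{\tilde{p}(x), \sigma}^{(i)}\oplus V_{\tilde{p}^*(x), \sigma}^{(i)}$. By Theorem \ref{splitting}, $W$ is a non-degenerate $\sigma$-stable subspace, and it is a free $k[x]/(\tilde p(x)^i\tilde p^*(x)^i)$-module.

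\textbf{Step 1: a bilinear form on $H_\sigma^{(i)}$.} Define
$$b(\overline{u+u^*},\overline{v+v^*})=\langle \tilde p(\sigma)^{i-1}u+u^*,\ \tilde p(\sigma)^{i-1}v+v^*\rangle .$$
Since $V_{\tilde p}$ and $V_{\tilde p^*}$ are each totally isotropic, only the cross terms $\langle \tilde p(\sigma)^{i-1}u,v^*\rangle$ and $\langle u^*,\tilde p(\sigma)^{i-1}v\rangle$ survive. To see that $b$ is well defined, replace $u$ by $u+\tilde p(\sigma)w$. The extra term is $\langle \tilde p(\sigma)^i w,v^*\rangle=0$. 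On the starred side, move $\tilde p(\sigma)^{i-1}$ across as a unit times $\sigma^{-N}\tilde p^*(\sigma)^{i-1}$, exactly as in the proof of Theorem \ref{splitting}. Then $\tilde p^*(\sigma)^{i-1}\tilde p^*(\sigma)w^*=0$ kills the ambiguity in $u^*$.

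Non-degeneracy of $b$ is the step I expect to be the main obstacle. Suppose $\overline{u}$ pairs to zero with everything. Then $\tilde p(\sigma)^{i-1}u$ is orthogonal to $V^{(i)}_{\tilde p^*}$ and, being in the $\tilde p$-part, to everything else in $W$. Hence $\tilde p(\sigma)^{i-1}u=0$ by non-degeneracy of $W$ and \cite[Lemma 3.1]{Mil}. Freeness over $k[x]/(\tilde p^i)$ then gives $u\in \tilde p(\sigma)V^{(i)}_{\tilde p}$. The same argument works on the starred side.

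\textbf{Step 2: the hermitian form.} The form $b$ is symmetric and $\sigma$-invariant, where $\sigma$ acts on $H_\sigma^{(i)}$ through $E$ because the minimal polynomial there divides $\tilde p\tilde p^*$. Separability of $\tilde p$ makes $E$ étale. Lemma \ref{hermitian}, applied to $(H^{(i)}_\sigma,b)$, then gives a unique hermitian $h^{(i)}_\sigma$ with $b={\rm Tr}_{E/k}\circ h^{(i)}_\sigma$; both existence and uniqueness come from that lemma. Lemma \ref{hermitian} is stated for a subspace of $V$, but its proof only uses a non-degenerate symmetric $\sigma$-invariant form with $E$-action, so it applies verbatim. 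Freeness of $H^{(i)}_\sigma$ over $E$ follows from freeness of $W$.

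\textbf{Step 3: completeness.} The invariant is clearly attached to $\sigma$: an isometry $\phi$ with $\phi\sigma\phi^{-1}=\sigma'$ maps each piece to the corresponding piece. One first checks that the isomorphism class of $(H^{(i)}_\sigma,h^{(i)}_\sigma)$ does not depend on the choice of splitting. The cleanest route is Proposition \ref{orth-base}: every non-degenerate hermitian space over $E$ is determined by its rank, and the rank equals ${\rm rank}_{k[x]/(\tilde p^i)}V^{(i)}_{\tilde p}$. That rank is a ${\rm GL}$-invariant given by the elementary divisors.

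For the converse, suppose $\sigma,\sigma'$ have isomorphic invariants. Then the ranks agree for every $i$, and it suffices to build an isometry $W\to W'$ intertwining $\sigma$ and $\sigma'$ for each $i$. Choose a $k[x]/(\tilde p^i)$-basis $e_1,\dots,e_n$ of $V^{(i)}_{\tilde p}$. By non-degeneracy of the pairing between $V^{(i)}_{\tilde p}$ and $V^{(i)}_{\tilde p^*}$, there is a dual basis $f_1,\dots,f_n$ of $V^{(i)}_{\tilde p^*}$ over $k[x]/(\tilde p^{*i})$. I would normalize it so that the induced $k$-valued pairing equals a standard model: the pairing $\langle a(\sigma)e_j, b(\sigma)f_l\rangle=\delta_{jl}\,\tau(a(x)b(x^{-1}))$ for a fixed nondegenerate functional $\tau$ on $k[x]/(\tilde p^i)$. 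This normalization is possible since any perfect pairing between a free module and its "reciprocal twist" can be transported to the dual basis. In this standard form the structure is rigid, so $W\cong W'$ equivariantly and isometrically.

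Since the total isotropy of $V_{\tilde p}$ and $V_{\tilde p^*}$ makes the whole structure a duality pairing, the hermitian invariant is in fact automatic here. Completeness therefore reduces to the ${\rm GL}$-type, in the spirit of the Corollary ${\rm U}(M,h)\cong {\rm GL}_n(K)$.
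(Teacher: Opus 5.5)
Your proof is correct and follows the paper's route: the same form $b_\sigma^{(i)}$ on $H_\sigma^{(i)}$, non-degeneracy from the perfect pairing between $V_{\tilde{p}(x),\sigma}^{(i)}$ and $V_{\tilde{p}^*(x),\sigma}^{(i)}$, Lemma \ref{hermitian} for existence and uniqueness of $h_\sigma^{(i)}$, and completeness reduced to the $k[x]$-module type of $V_{\tilde{p}(x),\sigma}$. Lemma \ref{hermitian} applies directly to $(H_\sigma^{(i)}, b_\sigma^{(i)})$ with the induced isometry, so no adaptation is needed; and your use of Proposition \ref{orth-base} for independence of the splitting, together with the explicit dual-basis isometry, spells out two steps the paper only asserts briefly.
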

\begin{proof} Define a bilinear map $b_{\sigma}^{(i)}$ on the vector space $H_{\sigma}^{(i)}$ over $k$ by 
$$  b_{\sigma}^{(i)} (\bar \xi + \bar \xi^*,  \bar \eta + \bar \eta ^* ) = \langle \tilde{p} (x)^{i-1} \xi  + \xi ^* , \   \tilde{p} (x)^{i-1} \eta+  \eta^* \rangle $$
where $\xi, \eta \in V_{\tilde{p}(x), \sigma}^ {(i)}$ and $\xi^*, \eta^* \in V_{\tilde{p}^*(x), \sigma}^ {(i)}$.  It can be verified that $b_{\sigma}^{(i)}$ is well-defined and symmetric. Suppose $\xi_0 \in V_{\tilde{p}(x), \sigma}^ {(i)}$ satisfying $\bar \xi_0\neq 0$ in $H^{(i)}$.  Then $\tilde{p} (x)^{i-1} \xi_0 \neq 0$. Since $V_{\tilde{p}(x), \sigma}^ {(i)} \oplus V_{\tilde{p}^*(x), \sigma}^ {(i)}$ is non-degenerate, there is $\eta_0^*\in V_{\tilde{p}^*(x), \sigma}^ {(i)}$ such that $\langle \tilde{p} (x)^{i-1} \xi_0, \eta_0^* \rangle \neq 0$. Therefore 
$$ b_{\sigma}^{(i)} (\bar \xi_0,  \bar {\eta_0^*}) =\langle \tilde{p} (x)^{i-1} \xi_0, \eta_0^* \rangle \neq 0 . $$ This implies that $b_{\sigma}^{(i)}$ is non-degenerate.  Since $\sigma$ induces an isometry of quadratic space $(H_{\sigma}^{(i)}, b_{\sigma}^{(i)})$ with the minimal polynomial $\tilde{p}(x) \tilde{p}^*(x)$,  one obtains the required hermitian form $h_{\sigma}^{(i)}$ by applying Lemma \ref{hermitian}. 
Since $(H_{\sigma}^{(i)}, b_{\sigma}^{(i)})$ is orthogonal sum of hyperbolic planes by \cite[Lemma 3.1]{Mil}, the hermitian space $(H_{\sigma}^{(i)}, h_{\sigma}^{(i)})$ does not depend on the choice of $V_{\tilde{p}(x), \sigma}^ {(i)} \oplus V_{\tilde{p}^*(x), \sigma}^ {(i)}$ up to isomorphism by Lemma \ref{hermitian}. 

Since the isomorphism classes of hermitian spaces $(H_{\sigma}^{(i)}, h_{\sigma}^{(i)})$ determine the ranks of free $k[x]/(p(x)^i)$-modules $V_{\tilde{p}(x), \sigma}^ {(i)} $  for $i=1, 2, \cdots$, the  sequence of isomorphism classes of hermitian spaces $(H_{\sigma}^{(i)}, h_{\sigma}^{(i)})$ determines the isomorphic class $V_{\tilde{p}(x), \sigma} $ as $k[x]$-module which is a complete invariant for the pair $(V_{\tilde{p}(x), \sigma} \oplus V_{\tilde{p}^*(x), \sigma} , \sigma)$.  
\end{proof}

\section{More complements about Milnor's classification of conjugacy classes of orthogonal groups}

Since a complete invariant for isomorphic classes of pair of a quadratic space with an isometry is given by a sequence of hermitian spaces by \cite[Theorem 3.3]{Mil},  it is natural to ask if any sequence of hermitian spaces can be realized as a complete invariant of some pair of a quadratic space with an isometry.  This is proved for semi-simple isometries by \cite[Lemma 1.2]{Mil}. In order to establish this result for general cases, we first need the following lemma.

\begin{lem}\label{basis} Let $p(x)$ be a monic irreducible polynomial of degree $2d$ over a field $k$ with $p(0)\neq 0$.  If $l$ is a positive integer and $s(x)=x^{-d}p(x)$, then $$\{ x^i s(x)^j : \ 0\leq i \leq 2d-1, \ 0\leq j \leq l-1 \} $$
is a basis of $k[x]/(p(x)^l)$ as $k$-vector space.  
\end{lem}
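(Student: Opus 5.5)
The plan is to compare dimensions and then prove linear independence; together these give the basis. First note that $s(x)=x^{-d}p(x)$ makes sense in $R=k[x]/(p(x)^l)$. Indeed, $p(0)\neq 0$ means $x$ and $p(x)$ have no common factor, so $x$ is coprime to $p(x)^l$ and is a unit in $R$. Thus $s(x)=x^{-d}p(x)$ is a unit multiple of $p(x)$ in $R$, and so is $s(x)^j=x^{-dj}p(x)^j$. Next, $\dim_k R=\deg(p^l)=2dl$. The proposed family $\{x^is(x)^j\}$ also has exactly $2d\cdot l$ members. So it suffices to show that the family is linearly independent over $k$.

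For independence, suppose
$$\sum_{j=0}^{l-1}\sum_{i=0}^{2d-1}c_{ij}x^is(x)^j=0 \quad\text{in } R,$$
and set $g_j(x)=\sum_i c_{ij}x^i$, a polynomial of degree $<2d$. Assume not all $g_j$ are zero, and let $j_0$ be the smallest index with $g_{j_0}\neq 0$. Every term with $j>j_0$ lies in the ideal $(p^{j_0+1})/(p^l)$, because $s^j$ is a unit times $p^j$. Hence
$$g_{j_0}(x)\,x^{-dj_0}p(x)^{j_0}\in (p(x)^{j_0+1})/(p(x)^l).$$
Multiplying by the unit $x^{dj_0}$, this lifts to $k[x]$ as
$$g_{j_0}(x)p(x)^{j_0}\equiv 0 \pmod{(p(x)^{j_0+1},\,p(x)^l)}.$$
Since $j_0+1\le l$, this says $p^{j_0+1}\mid g_{j_0}p^{j_0}$ in $k[x]$, and therefore $p\mid g_{j_0}$. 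But $\deg g_{j_0}<2d=\deg p$ and $g_{j_0}\neq 0$, which is a contradiction. Hence all $c_{ij}=0$.

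The only point needing care is lifting a statement in the quotient ring $R$ back to $k[x]$ when clearing the unit $x^{-dj_0}$. This works because $x$ is invertible modulo $p^l$: we can choose a polynomial $y(x)$ with $x^{d}y(x)\equiv 1 \pmod{p^l}$ and use $y(x)^{j_0}$ in place of $x^{-dj_0}$. With that in hand, the argument reduces to the divisibility statement above.

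Alternatively, one could show spanning by induction on $l$. The set $\{x^i\}_{0\le i<2d}$ is a basis of $k[x]/(p)$. Given $a\in R$, subtract the corresponding combination of the $x^i$ to land in $pR=sR$, write the difference as $s\cdot b$, and recurse on $b$ modulo $p^{l-1}$. The dimension count again finishes the argument.
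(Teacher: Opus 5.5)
Your proof is correct and follows essentially the same route as the paper: a dimension count plus linear independence via the filtration by powers of $p(x)$. Taking the minimal index $j_0$ with $g_{j_0}\neq 0$ is just a non-inductive packaging of the paper's argument, which reduces modulo $p(x)$ to kill the $j=0$ coefficients, then divides by $s(x)$ using the isomorphism $k[x]/(p(x)^{l-1})\xrightarrow{\cdot s(x)}(p(x))/(p(x)^l)$ and inducts on $l$.
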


\begin{proof}  Since $\dim_k(k[x]/(p(x)^l))=2dl$ as a $k$-vector space, one only needs to show that the given vectors are linearly independent over $k$.  Suppose 
$$ \sum_{i=0}^{2d-1} \sum_{j=0}^{l-1} a_{ij} \bar x^i s(\bar x)^j = 0  \ \ \ \text{in} \ \ \ k[x]/(p(x)^l) $$  
where $a_{ij}\in k$ for $0\leq i \leq 2d-1$ and $ 0\leq j \leq l-1$. Since  the following natural maps give a short exact sequence 
$$ 0\longrightarrow (p(x))/(p(x)^l) \longrightarrow k[x]/(p(x)^l) \longrightarrow k[x]/(p(x)) \longrightarrow 0$$ as $k[x]$-modules, one obtains $a_{i0}=0$ for $0\leq i\leq 2d-1$. This implies that 
$$ s(x) \sum_{i=0}^{2d-1} \sum_{j=1}^{l-1} a_{ij} \bar x^i s(\bar x)^{j-1} =0 \ \ \ \text{in} \ \ \ k[x]/(p(x)^l) . $$
Since 
$$ k[x]/(p(x)^{l-1})  \xrightarrow {\cdot s(x)} (p(x))/(p(x)^l) ; \ \alpha(x) \mapsto \alpha(x) s(x) $$ is an isomorphism of $k[x]$-modules, one concludes 
$$ \sum_{i=0}^{2d-1} \sum_{j=1}^{l-1} a_{ij} \bar x^i s(\bar x)^{j-1} =0 \ \ \ \text{in} \ \ \ k[x]/(p(x)^{l-1}) . $$ Then $a_{ij}=0$ for $0\leq i \leq 2d-1$ and $ 1\leq j \leq l-1$ by induction on $l$ and the result follows. 
\end{proof}

The following construction gives an answer to the realization of  \cite[Theorem 3.3]{Mil}, which can also be viewed as generalization of \cite[Lemma 1.2]{Mil}.

\begin{prop} \label{realization} Let $p(x)$ be a monic irreducible polynomial of degree bigger than 1 over a field $k$ with $char(k)\neq 2$ and $p^*(x)=p(x)$.  Suppose that $M$ is a free $k[x]/(p(x)^l)$-module of rank $n$ with a positive integer $l$ and $F=k[x]/(p(x))$. 

If $(M/p(x)M, h)$ is a hermitian space over $F$ with respect to the involution $\iota$ induced by $x\mapsto x^{-1}$,   there is a non-degenerate inner product $(M, \langle \ , \rangle)$ over $k$ with an isometry $\sigma$ such that the minimal polynomial of $\sigma$ is $p(x)^l$ and 
$$ {\rm Tr}_{F/k}(h(\bar u, \bar v))= \langle (\sigma^{-1} p(\sigma))^{l-1} u, v \rangle , \ \ \forall u, v \in M .$$
\end{prop}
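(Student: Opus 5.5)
The plan is to reduce to rank one using an orthogonal basis of the hermitian space, and then to build the form on each rank-one piece from a single linear functional on $R=k[x]/(p(x)^l)$, using Lemma \ref{basis} to see the filtration by powers of $p$.

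Let $R=k[x]/(p(x)^l)$ with $\deg p=2d$ (a reciprocal irreducible polynomial of degree bigger than $1$ has even degree and $p(0)=1$). Since $p^*=p$, the ideal $(p(x)^l)$ is stable under $x\mapsto x^{-1}$, so $R$ carries an involution $\iota$ lifting the one on $F$. Moreover $s(x)=x^{-d}p(x)$ satisfies $\iota(s)=s$. I will implicitly take $h$ non-degenerate, as needed for the conclusion. Since $char(k)\neq 2$, the standard Gram–Schmidt argument for hermitian forms over the field $F$ gives an orthogonal basis $\bar e_1,\dots,\bar e_n$ of $M/p(x)M$ with $h(\bar e_i,\bar e_i)=a_i$ in the fixed field $F_0=F^{\iota}$, and $a_i\ne 0$. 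Lifting the $\bar e_i$ to elements $e_i$ gives an $R$-basis of $M$ by Nakayama. It then suffices to treat $M=Re$ with $h(\bar e,\bar e)=a$, and to take the orthogonal sum over $i$, with $\sigma$ acting as multiplication by $x$ on each summand.

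On $M=R$, let $\sigma$ be multiplication by $x$; its minimal polynomial is $p(x)^l$. I will look for the form in the shape
$$\langle u,v\rangle=\ell(u\,\iota(v))$$
for a $k$-linear map $\ell:R\to k$ with $\ell\circ\iota=\ell$. Invariance of $\ell$ gives symmetry. The relation $x\,\iota(x)=1$ gives $\langle \sigma u,\sigma v\rangle=\langle u,v\rangle$.

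To construct $\ell$, use Lemma \ref{basis}: every $r\in R$ is uniquely $\sum_{j<l} r_j(x)s(x)^j$ with $\deg r_j<2d$. The subspace $s^{l-1}R$ is isomorphic to $F$ via $c\mapsto s^{l-1}c$. Define $\ell_0(r)=\mathrm{Tr}_{F/k}(w\,\bar r_{l-1})$ for a suitable $w\in F^\times$, and put $\ell=\tfrac12(\ell_0+\ell_0\circ\iota)$. On $s^{l-1}R$ one has $\iota(s^{l-1}c)=s^{l-1}\iota(c)$, so there $\ell(s^{l-1}c)=\mathrm{Tr}_{F/k}(w'\bar c)$ with $w'=\tfrac12(w+\iota(w))$ after using $\mathrm{Tr}\circ\iota=\mathrm{Tr}$.

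For the required identity, write $(x^{-1}p)^{l-1}=x^{(d-1)(l-1)}s^{l-1}$. Then
$$\langle(\sigma^{-1}p(\sigma))^{l-1}u,v\rangle=\mathrm{Tr}_{F/k}\big(w'\,\bar x^{(d-1)(l-1)}\bar u\,\iota(\bar v)\big),$$
and this must equal $\mathrm{Tr}_{F/k}(a\,\bar u\,\iota(\bar v))$. So $w'$ must be $a\bar x^{-(d-1)(l-1)}$.

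\textbf{Main obstacle.} The key delicate step is choosing $w$ so that $w'$ has this value, given that $w'$ lies in $F_0$. For $d=1$ this is immediate ($w=a$). In general, I expect to reconcile it as follows. The symmetry of the left side forces $\iota(t)=t$ modulo $p$ on the relevant piece, where $t=(x^{-1}p)^{l-1}$. One checks $\iota(t)=x^{2(1-d)(l-1)}t$, so the stated normalization should be read with $\sigma^{-d}p(\sigma)$, i.e.\ with $s$, in place of $\sigma^{-1}p(\sigma)$. With that reading, take $w=a$, so $w'=a$. I would first verify this reading against how the proposition is later used, and otherwise adjust $h$ by the unit $\bar x^{(d-1)(l-1)}$.

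Finally, non-degeneracy. Let $u\ne0$ have annihilator $(p^m)$ with $m\le l$. Then $s^{m-1}u$ is a nonzero element of $s^{l-1}R$, because $u\in s^{l-m}R$ but $u\notin s^{l-m+1}R$. Set $u'=s^{m-1}u=s^{l-1}c$ with $\bar c\neq0$. Then
$$\langle u,s^{m-1}v\rangle=\ell(u'\iota(v))=\mathrm{Tr}_{F/k}\big(a\,\bar c\,\iota(\bar v)\big),$$
using $\iota(s)=s$. This is nonzero for a suitable $v$, since $a\bar c\neq0$ and the trace form of the separable extension $F/k$ is non-degenerate.
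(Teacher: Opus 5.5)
Your proof is correct, and you read the normalization correctly. The paper's own proof concludes $\langle s(x)^{l-1}u,v\rangle_s=\mathrm{Tr}_{F/k}(h(\bar u,\bar v))$, and the later use of this proposition (the form $b_\sigma^{(2l_1+1)}$ in the proof of the main theorem) is with $\sigma^{-d}p_1(\sigma)$. So $\sigma^{-1}p(\sigma)$ in the statement is a misprint.

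Your symmetry computation shows the $s$-reading is forced. If $d\ge 2$, $l\ge 2$ and $\bar x^{2(d-1)(l-1)}\neq 1$ in $F$, the right-hand side with $\sigma^{-1}p(\sigma)$ is not symmetric in $u,v$, while the left-hand side is. So the literal statement fails. You should drop the fallback of rescaling $h$ by $\bar x^{(d-1)(l-1)}$: that unit is not $\iota$-fixed, so the rescaled form is no longer hermitian.

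Your global strategy matches the paper's: orthogonal basis of $(M/pM,h)$, lift to an $R$-basis, build a form on each cyclic summand, take the orthogonal sum with $\sigma$ acting by $x$. The difference is in the rank-one step.

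The paper works with explicit Gram values:
\begin{itemize}
\item it prescribes them on the $k$-basis $\{x^is(x)^jv_s\}$ of Lemma \ref{basis} via (\ref{sym}) and (\ref{value});
\item it extends them to $i\ge d$ through the reduction $x^i+x^{-i}=\alpha(x+x^{-1})s(x)+\beta(x+x^{-1})$;
\item it gets non-degeneracy from the block-triangular determinant $\pm\det(\mathrm{Tr}_{F/k}(h(x^i\bar v_s,x^j\bar v_s)))^l$ together with \cite[Lemma 1.2]{Mil};
\item it leaves the isometry property to a direct check.
\end{itemize}

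Your form $\ell(u\,\iota(v))$ with $\ell\circ\iota=\ell$ is what (\ref{sym}) encodes implicitly. Making it explicit buys three things:
\begin{itemize}
\item symmetry and $\sigma$-invariance come for free;
\item only the restriction of $\ell$ to the socle $s^{l-1}R\cong F$ matters, so Lemma \ref{basis} is needed only to extend a functional, and any $k$-linear extension followed by symmetrization works;
\item the determinant is replaced by the fact that every nonzero ideal of the chain ring $R$ contains $s^{l-1}R$.
\end{itemize}
The paper's route yields an explicit Gram matrix; yours is shorter and more transparent.

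Both arguments rely on non-degeneracy of the trace form of $F/k$ (you explicitly, the paper through \cite[Lemma 1.2]{Mil}). That is, both need $F/k$ to be separable, which the statement leaves implicit.
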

\begin{proof} Let $\bar v_1, \cdots, \bar v_n$ be an orthogonal basis of $(M/p(x)M, h)$. Since $M$ is a free $k[x]/(p(x)^l)$-module of rank $n$, one has 
$$ M = (k[x]/(p(x)^l) )v_1 \oplus \cdots \oplus (k[x]/(p(x)^l)) v_n .$$
By Lemma \ref{basis}, each $k$-vector subspace $(k[x]/(p(x)^l) )v_s$ has a basis $$\{ x^i s(x)^j v_s: \ 0\leq i \leq 2d-1, \ 0\leq j \leq l-1 \} $$ over $k$ for $1\leq s\leq n$. Define an inner product on this subspace satisfying 
\begin{equation} \label{sym}  \langle x^{i}s(x)^j v_s, x^{i'}s(x)^{j'} v_s \rangle_s = \langle x^{|i-i'|} s(x)^{j+j'} v_s, v_s \rangle_s \end{equation}
and 
 \begin{equation} \label{value} \langle x^i s(x)^j v_s, v_s \rangle_s= \begin{cases}  {\rm Tr}_{F/k} (x^{i} h(\bar v_s, \bar v_s) ) \ \ \ & j=l-1, \ 0\leq i< d \\  
0 \ \ \ & j \neq l-1, \ 0\leq i<d \end{cases} \end{equation} 
 for $1\leq s\leq n$.  Since 
 $$ \langle x^i s(x)^j v_s, v_s \rangle_s =  \langle x^{-i} s(x)^j v_s, v_s \rangle_s $$ by (\ref{sym}), 
 one obtains 
 $$ \langle x^i s(x)^j v_s, v_s \rangle_s = \frac{1}{2}  \langle (x^i+x^{-i} )s(x)^j v_s, v_s \rangle_s . $$
 When $i\geq d$,  there are polynomials $\alpha(x)$ and $\beta(x)$ with $\deg (\alpha(x) )< i$ and $\deg(\beta(x)) < d$ in $k[x]$ such that 
 $$ x^i+x^{-i} = \alpha(x+x^{-1}) s(x) + \beta(x+x^{-1}) .$$ Then the following inductive formula
 $$ \langle x^i s(x)^j v_s, v_s \rangle_s = \frac{1}{2} \langle \alpha(x+x^{-1}) s(x)^{j+1} v_s, v_s \rangle_s +\frac{1}{2} \langle \beta(x+x^{-1}) s(x)^j v_s, v_s\rangle_s $$
 gives the final value by (\ref{value}). Since 
 $${\rm Tr}_{F/k} (x^{i} h(\bar v_s, \bar v_s) ) = {\rm Tr}_{F/k} (x^{-i} \overline{h(\bar v_s, \bar v_s)} ) = {\rm Tr}_{F/k} (x^{-i} h(\bar v_s, \bar v_s) ), $$ one also has 
 $$\begin{aligned}  & {\rm Tr}_{F/k} (x^{i} h(\bar v_s, \bar v_s) ) =   \frac{1}{2} {\rm Tr}_{F/k} ((x^{i}+x^{-i}) h(\bar v_s, \bar v_s) ) \\
 = & \frac{1}{2} {\rm Tr}_{F/k} (\beta(x+x^{-1})h(\bar v_s, \bar v_s) ). \end{aligned} $$ for $i\geq d$. This extends the first part of (\ref{value}) as 
 $$\langle x^i s(x)^{l-1} v_s, v_s \rangle_s=  {\rm Tr}_{F/k} (x^{i} h(\bar v_s, \bar v_s) ) $$ for all $i$ and
 $$\langle s(x)^{l-1} u, v \rangle_s=  {\rm Tr}_{F/k} (h(\bar u, \bar v) ),   \ \ \ \forall u, v\in (k[x]/(p(x)^l) )v_s $$ by a direct computation.  By ordering the basis as 
 $$ v_s, x v_s, \cdots, x^{2d-1} v_s;$$
 $$ s(x)v_s, xs(x)v_s, \cdots, x^{2d-1} s(x)v_s; $$
 $$\cdots \cdots \cdots $$
 $$s(x)^{l-1}v_s, x s(x)^{l-1}v_s, \cdots, x^{2d-1}s(x)^{l-1}v_s  , $$
 the determinant of the inner product $\langle \ , \rangle_s$ with respect to this basis is equal to 
 $$ \pm \det( {\rm Tr}_{F/k} (h(x^i \bar v_s, x^j \bar v_s) ))_{0\leq i, j \leq 2d-1} ^l . $$ Since $$\det( {\rm Tr}_{F/k} (h(x^i \bar v_s, x^j \bar v_s) ))_{0\leq i, j \leq 2d-1} $$ is equal to the determinant of the quadratic space ${\rm Tr}_{F/k} \circ h|_{F \bar v_s}$, one concludes that 
 $$\det( {\rm Tr}_{F/k} (h(x^i \bar v_s, x^j \bar v_s) ))_{0\leq i, j \leq 2d-1}  \neq 0 $$ by \cite[Lemma 1.2]{Mil} and the quadratic space $((k[x]/(p(x)^l) )v_s, \ \langle \ , \rangle_s)$ is non-degenerate.  Then 
 $$\langle \ , \rangle = \langle \ , \rangle_1 \perp \cdots \perp \langle \ , \rangle_n $$ is a non-degenerate inner product on $M$.  It can be verify directly that the map 
 $$\sigma: M \longrightarrow M; \ m \mapsto x m \ \ \ \ \forall m\in M$$ is an isometry of $(M, \langle \ , \rangle )$ as desired. 
\end{proof}

One has also the following realization of \cite[Theorem 3.4]{Mil}.

\begin{prop}\label{real-odd}  Let $M$ be a free $k[x]/(q(x)^l)$-module of rank $n$ with an odd integer $l>0$, where $q(x)=(x+1)$ or $(x-1)$.

If $(M/q(x) M, b)$ is a non-degenerate quadratic space over $k$ with $char(k)\neq 2$, there is a non-degenerate quadratic space $(M, \langle \ , \rangle)$ over $k$ with an isometry $\sigma$ such that the minimal polynomial of $\sigma$ is $q(x)^l$ and 
$$ b (\bar u, \bar v) = \langle (\sigma- \sigma^{-1})^{l-1} u, v \rangle , \ \ \forall u, v \in M .$$
\end{prop}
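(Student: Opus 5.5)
The plan is to reduce to the rank one case by diagonalizing $b$. Then we build the form and the isometry directly in terms of the nilpotent operator $t=\sigma-\sigma^{-1}$, which will be skew-adjoint, rather than in terms of $\sigma$.

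\emph{Step 1: reduction to rank one.} Since $char(k)\neq 2$, the non-degenerate quadratic space $(M/q(x)M, b)$ has an orthogonal basis $\bar v_1, \dots, \bar v_n$ with $b(\bar v_s, \bar v_s)=a_s\neq 0$. Lift each $\bar v_s$ to some $v_s\in M$. By Nakayama, $M=\bigoplus_s (k[x]/(q(x)^l))v_s$ with each summand free of rank one. It then suffices to construct, on each summand, a non-degenerate form $\langle\ ,\ \rangle_s$ and an isometry acting as $x$ with the required property. Taking the orthogonal sum of these, exactly as at the end of the proof of Proposition \ref{realization}, gives $(M,\langle\ ,\ \rangle)$ and $\sigma$. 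The identity $b(\bar u,\bar v)=\langle(\sigma-\sigma^{-1})^{l-1}u,v\rangle$ then holds on basis vectors and extends by bilinearity, because both sides vanish on $q(x)M$.

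\emph{Step 2: the rank one construction.} Let $N$ be a $k$-space with basis $w, tw, \dots, t^{l-1}w$, viewed as the free $k[t]/(t^l)$-module of rank one. Define
$$\langle t^i w, t^j w\rangle = \begin{cases} (-1)^i a & i+j=l-1,\\ 0 & \text{otherwise.}\end{cases}$$
This form is symmetric, because $i+j=l-1$ with $l-1$ even forces $i\equiv j \pmod 2$. This is exactly where the hypothesis that $l$ is odd is used. The Gram matrix is anti-diagonal with nonzero entries, so the form is non-degenerate. A direct check on basis vectors gives $\langle tu,v\rangle=-\langle u,tv\rangle$, and $\langle t^{l-1}w,w\rangle=a$.

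\emph{Step 3: defining the isometry.} Since $t$ is nilpotent and $2$ is invertible, the binomial series defines $c=\sqrt{1+t^2/4}\in k[t]/(t^l)$ with $c\equiv 1 \pmod{t^2}$. Set $\sigma_+=t/2+c$.
\begin{itemize}
\item The adjoint of any polynomial $g(t)$ is $g(-t)$, so the adjoint of $\sigma_+$ is $-t/2+c$.
\item We have $(t/2+c)(-t/2+c)=c^2-t^2/4=1$, so the adjoint of $\sigma_+$ is $\sigma_+^{-1}$ and $\sigma_+$ is an isometry.
\item Moreover $\sigma_+-\sigma_+^{-1}=t$.
\item Also $\sigma_+-1=t\cdot(1/2+(c-1)/t)$, where the second factor is a unit. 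Hence $N$ is a free $k[x]/((x-1)^l)$-module of rank one via $\sigma_+$, generated by $w$, and the minimal polynomial of $\sigma_+$ is $(x-1)^l$.
\end{itemize}
For $q(x)=x-1$ we identify $w$ with $v_s$ and take $a=a_s$. For $q(x)=x+1$ we use $\sigma_-=-\sigma_+$, which is again an isometry with minimal polynomial $(x+1)^l$. For it, $\sigma_--\sigma_-^{-1}=-t$, and $(-t)^{l-1}=t^{l-1}$ because $l-1$ is even, so the same identity $\langle(\sigma-\sigma^{-1})^{l-1}w,w\rangle=a$ holds.

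\emph{Step 4: the identity for all $u,v$.} On $N$ we need $\langle t^{l-1}u,v\rangle=b(\bar u,\bar v)$, where $b$ on $N/tN$ is $\langle a\rangle$. Write $u=\alpha(t)w$ and $v=\beta(t)w$. Then, using skew-adjointness of $t$,
$$\langle t^{l-1}u,v\rangle=\alpha(0)\beta(0)\langle t^{l-1}w,w\rangle=\alpha(0)\beta(0)\,a,$$
which is the required value.

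The only potentially delicate points are two. The first is the change of variables between $x$ and $t$, which is handled by the unit computation in Step 3. The second is that the symmetry of the anti-diagonal form needs $l$ odd, which is given. I expect no real obstacle beyond these.
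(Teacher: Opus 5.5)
Your proof is correct and follows essentially the same route as the paper: an orthogonal basis of $b$ reduces everything to rank one summands, the form on each summand is the anti-diagonal one in the basis $\{(\sigma-\sigma^{-1})^i v_s\}$, which makes $t=\sigma-\sigma^{-1}$ skew-adjoint, and $\sigma$ is an isometry because the adjoint of a polynomial $g(t)$ is $g(-t)$. The differences are minor. You start from $t$, write $\sigma$ explicitly as $\pm\bigl(t/2+\sqrt{1+t^2/4}\bigr)$ and transport the structure to $M$, whereas the paper starts from $x$ and expands it in powers of $x-x^{-1}$ inside $k[x]/(q(x)^l)$, which is the same expansion up to sign. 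You also check the minimal polynomial and the identity $b(\bar u,\bar v)=\langle(\sigma-\sigma^{-1})^{l-1}u,v\rangle$ for all $u,v$, both of which the paper leaves implicit.
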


\begin{proof} Let $\bar v_1, \cdots, \bar v_n$ be an orthogonal basis of $(M/q(x)M, b)$. Since $M$ is a free $k[x]/(q(x)^l)$-module of rank $n$, one has 
$$ M = (k[x]/(q(x)^l) )v_1 \oplus \cdots \oplus (k[x]/(q(x)^l)) v_n .$$
Since each $k$-vector subspace $(k[x]/(p(x)^l) )v_s$ has a basis $$\{ (x-x^{-1})^i v_s: \ 0 \leq i \leq l-1 \} $$ over $k$ for $1\leq s\leq n$, one defines an inner product on this subspace by
\begin{equation} \label{inner}  \langle (x-x^{-1})^{i} v_s, (x-x^{-1})^{j}v_s \rangle_s = (-1)^j \langle (x-x^{-1})^{i+j}  v_s, v_s \rangle_s \end{equation}
and 
 \begin{equation} \label{pm-value} \langle (x-x^{-1})^i v_s, v_s \rangle_s= \begin{cases} b(\bar v_s, \bar v_s)  \ \ \ & i=l-1\\  
0 \ \ \ & \text{otherwise} \end{cases} \end{equation} 
 for $1\leq s\leq n$.  Since the determinant of the inner product $\langle \ , \rangle_s$ with respect to this basis is equal to $\pm b(\bar v_s, \bar v_s)^l \neq 0$, the quadratic space $(M, \langle \ , \rangle)$ is non-degenerate where $\langle \ , \rangle = \langle \ , \rangle_1 \perp \cdots \perp \langle \ , \rangle_n$.  

Write 
$$ x= \sum_{i=0}^{l-1} \alpha_i (x-x^{-1})^i \ \ \ \text{in} \ \ \ k[x]/(p(x)^l) $$  where $\alpha_i \in k$ for $0\leq i\leq l-1$.  Then 
$$ x= \sum_{i=0}^{l-1} \alpha_i (x-x^{-1})^i +(x-x^{-1})^r x^a g(x) $$ where $r, a\in \Bbb Z$ with $r\geq l$ and $g(x)\in k[x]$ with $(g(x), q(x))=1$. Replacing $x$ with $x^{-1}$ in the above identity, one obtains 
 $$x^{-1} = \sum_{i=0}^{l-1} (-1)^i \alpha_i (x-x^{-1})^i + (x-x^{-1})^r x^b h(x) $$ 
where $b\in \Bbb Z$ and $h(x)\in k[x]$ with $(q(x), h(x))=1$. Then 
$$\langle (x-x^{-1})^i x v_s, v_s \rangle_s= \alpha_{l-1-i} b(\bar v_s, \bar v_s)= \langle (x-x^{-1})^i v_s, x^{-1} v_s \rangle_s  $$
by (\ref{pm-value}). This implies that the map 
 $$\sigma: M \longrightarrow M; \ m \mapsto x m \ \ \ \ \forall m\in M$$ is an isometry of $(M, \langle \ , \rangle )$ as desired. 
\end{proof}

In order to give a proof of Theorem \ref{main}, one needs the similar result as the remark after \cite[Theorem 3.3]{Mil} for Case 2) in \cite{Mil}.

\begin{prop} \label{pm 1} Let $(V,  \langle \  , \  \rangle)$ be a non-degenerate quadratic space over a field of $k$ with $char(k)\neq 2$ and $f(x)$ be the characteristic polynomial of $\sigma\in {\rm O}(V)$. Suppose that $q(x)=(x+ 1)$ or $(x-1)$ is a factor of $f(x)$ and 
$$ V_{q(x), \sigma } = V_{q(x), \sigma }^{(1)} \perp \cdots \perp V_{q(x), \sigma }^{(l)} \perp \cdots $$
where  $(V_{q(x), \sigma}^{(l)}, \langle \ , \rangle)$ is non-degenerate and free as $k[x]/(q(x)^l)$-module by \cite[Theorem 3.2]{Mil}.

When $l$ is even, then $V_{q(x), \sigma }^{(l)}$ is an orthogonal sum of hyperbolic planes. 

When $l$ is odd, then $V_{q(x), \sigma }^{(l)}= W\perp H$ where 
$$ (W, \langle \  , \rangle) \cong (V_{q(x), \sigma }^{(l)}/q(x)V_{q(x), \sigma }^{(l)}, (-1)^{\frac{l-1}{2}} b_\sigma^{(l)}) $$ 
defined by (\ref{bil}) and $H$ is an orthogonal sum of hyperbolic planes. 
\end{prop}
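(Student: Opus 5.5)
The plan is to show that the pair $(V_{q(x),\sigma}^{(l)},\sigma)$ is isometric to a model pair built from an orthogonal decomposition of $M=V_{q(x),\sigma}^{(l)}$ into cyclic pieces, and to identify these models using the realization results Proposition \ref{real-odd} and uniqueness from Milnor's classification \cite[Theorem 3.4]{Mil}. Put $M=V_{q(x),\sigma}^{(l)}$, a free $k[x]/(q(x)^l)$-module of rank $n$. Write $\tau=\sigma-\sigma^{-1}$; since $x-x^{-1}=x^{-1}(x-1)(x+1)$, $\tau$ is nilpotent of order exactly $l$ on $M$ and generates the same filtration as $q(\sigma)$. The adjoint of $\tau$ is $-\tau$, so $\langle \tau^i u,\tau^j v\rangle=(-1)^j\langle\tau^{i+j}u,v\rangle$.

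\textbf{Case $l$ even.} Here $b_\sigma^{(l)}$ is alternating (Milnor), so $M/q(x)M$ has a symplectic basis $\bar e_1,\bar f_1,\dots$ with $b(\bar e_r,\bar f_r)=1$. I lift these to $e_r,f_r$ and use the standard Gram--Schmidt style induction (as in the proof of Theorem \ref{splitting}) to split $M=N_1\perp M'$, where $N_1=k[\tau]e_1\oplus k[\tau]f_1$ is free of rank $2$ and non-degenerate. It then suffices to find a totally isotropic $k$-subspace of dimension $l$ in $N_1$ (half its dimension $2l$). I take $L=\mathrm{span}\{\tau^i e_1 : 0\le i<l/2\}\oplus\mathrm{span}\{\tau^i f_1 : 0\le i<l/2\}$ after first modifying $e_1$ by elements of $k[\tau]\tau e_1+k[\tau]\tau f_1$ so that $\langle\tau^i e_1,e_1\rangle=0$ for all $i$. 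For odd $i$ this is automatic from the skew relation; for even $i<l-1$ I kill the values inductively on $i$ from the top, using that the pairing $\tau^{l-1}e_1\cdot f_1$ is $1$. The same is then done for $f_1$. Then $\langle\tau^ie_1,\tau^je_1\rangle=\pm\langle\tau^{i+j}e_1,e_1\rangle=0$, and the cross terms $\langle\tau^i e_1,\tau^j f_1\rangle$ with $i+j<l-1$ vanish. A non-degenerate space of dimension $2l$ containing an isotropic subspace of dimension $l$ is hyperbolic.

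\textbf{Case $l$ odd.} I take an orthogonal basis $\bar v_s$ of $(M/qM,b)$ and, by the same lifting and splitting, write $M=\perp_s k[\tau]v_s$. Each summand is of the form built in Proposition \ref{real-odd}. I normalize $v_s$ by adding $\tau$-multiples so that $\langle\tau^i v_s,v_s\rangle=0$ for $i<l-1$ (odd $i$ automatic, even $i$ killed inductively), leaving $\langle\tau^{l-1}v_s,v_s\rangle=b(\bar v_s,\bar v_s)=a$. Setting $m=(l-1)/2$, the span of $\tau^{m+1}v_s,\dots,\tau^{l-1}v_s$ is totally isotropic of dimension $m$. Its orthogonal complement modulo itself is spanned by $\tau^m v_s$, and $\langle\tau^m v_s,\tau^m v_s\rangle=(-1)^m a$. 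By Witt decomposition the summand is therefore $\langle(-1)^m a\rangle\perp$ (hyperbolic of rank $2m$). Summing over $s$ gives $W\cong(M/qM,(-1)^{(l-1)/2}b_\sigma^{(l)})$ and $H$ hyperbolic.

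\textbf{Main obstacle.} The hardest step is the normalization making $\langle\tau^i v,v\rangle=0$ for $i<l-1$ while preserving the summand decomposition. I would do it by replacing $v$ with $(1+c\,\tau^{j})v$ for suitable $c\in k$ and even $j$, checking that this changes $\langle\tau^{l-1-j}v,v\rangle$ by $2c\,a$ plus lower-order terms. This is possible because $\mathrm{char}(k)\neq 2$.
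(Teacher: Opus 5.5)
\textbf{Gap in the even case.} The odd case is fine, but the even case has a false step. After your normalizations you know only that $\langle \tau^i e_1,e_1\rangle=\langle\tau^i f_1,f_1\rangle=0$ for all $i$ and that $\langle\tau^{l-1}e_1,f_1\rangle=1$. The cross terms are $\langle\tau^i e_1,\tau^j f_1\rangle=(-1)^j\langle\tau^{i+j}e_1,f_1\rangle$, and nothing you did forces $\langle \tau^{m}e_1,f_1\rangle=0$ for $m<l-1$.

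Already for $l=2$ you have $L=\mathrm{span}\{e_1,f_1\}$. The value $\langle e_1,f_1\rangle$ is untouched by the moves $e_1\mapsto e_1+c\,\tau f_1$ and $f_1\mapsto f_1+c'\tau e_1$ that carry out your normalization, since the changes are $c\langle \tau f_1,f_1\rangle=0$ and $c'\langle e_1,\tau e_1\rangle=0$. So it keeps its initial value, which need not be $0$, and $L$ need not be totally isotropic.

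There are two ways to repair this:
\begin{enumerate}
\item Add one more normalization: replace $f_1$ successively by $(1+c\,\tau^{l-1-m})f_1$ for $m=l-2,\dots,0$. Each step kills $\langle\tau^m e_1,f_1\rangle$ without disturbing larger $m$. It preserves $\langle \tau^i f_1,f_1\rangle=0$, because once $f_1$ is normalized, $\langle g(\tau)f_1,f_1\rangle=0$ for every polynomial $g$.
\item Simpler: replace $L$ by $\tau^{l/2}N_1$, or skip the symplectic basis and the splitting entirely and use $\tau^{l/2}M$. By skew-adjointness $\langle\tau^{l/2}u,\tau^{l/2}v\rangle=(-1)^{l/2}\langle\tau^l u,v\rangle=0$, and $\dim_k\tau^{l/2}M=\frac12\dim_k M$ by freeness. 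This is exactly the paper's proof of the even case.
\end{enumerate}

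\textbf{The odd case, compared with the paper.} The normalization you call the main obstacle is never actually used. Using only $\tau^*=-\tau$ and $\tau^l v_s=0$, you get that $\tau^{m+1}k[\tau]v_s$ is totally isotropic. Its orthogonal complement in $k[\tau]v_s$ is $\tau^m k[\tau]v_s$ by a dimension count. Finally, $\langle\tau^m v_s,\tau^m v_s\rangle=(-1)^m\langle\tau^{2m}v_s,v_s\rangle=(-1)^m a$.

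The paper runs this same argument globally on $M$, with no orthogonal basis or cyclic splitting:
\begin{enumerate}
\item $q(x)^{(l+1)/2}M$ is totally isotropic, with orthogonal complement $q(x)^{(l-1)/2}M$.
\item Witt decomposition gives $M=H\perp H^\perp$ with $H^\perp\cong q(x)^{(l-1)/2}M/q(x)^{(l+1)/2}M$.
\item Multiplication by $\Delta^{(l-1)/2}$ identifies this quotient with $(M/q(x)M,(-1)^{(l-1)/2}b_\sigma^{(l)})$.
\end{enumerate}
Your cyclic decomposition is a valid but longer route to the same conclusion. The only real content in either route is this isotropic-filtration argument.
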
 

\begin{proof}  When $l$ is even, then
$$\dim_k (q(x)^{\frac{l}{2}} V_{q(x), \sigma }^{(l)})= \frac{1}{2} \dim_k (V_{q(x), \sigma }^{(l)}) . $$ 
Since $q(x)^{\frac{l}{2}} V_{q(x), \sigma }^{(l)}$ is isotropic, one has that $V_{q(x), \sigma }^{(l)}$ is an orthogonal sum of hyperbolic planes as desired. 

When $l$ is odd, then
 $$ q(x)^{\frac{l+1}{2}} V_{q(x), \sigma}^{(l)}  \subset  q(x)^{\frac{l-1}{2}} V_{q(x), \sigma}^{(l)} \subseteq (q(x)^{\frac{l+1}{2}} V_{q(x), \sigma}^{(l)} )^{\perp} .$$ Since 
$$ \dim_k (q(x)^{\frac{l+1}{2}} V_{q(x), \sigma}^{(l)}) + \dim_k (q(x)^{\frac{l+1}{2}} V_{q(x), \sigma}^{(l)} )^{\perp} = \dim_k (V_{q(x), \sigma }^{(l)}) $$
by \cite[42:6]{OM}, one concludes that 
$$q(x)^{\frac{l-1}{2}} V_{q(x), \sigma}^{(l)} = (q(x)^{\frac{l+1}{2}} V_{q(x), \sigma}^{(l)} )^{\perp} .$$ 
Since $q(x)^{\frac{l+1}{2}} V_{q(x), \sigma}^{(l)}$ is isotropic, there is an orthogonal sum $H$ of hyperbolic planes such that 
$$ q(x)^{\frac{l+1}{2}} V_{q(x), \sigma}^{(l)} \subset H \ \ \ \text{and} \ \ \ V_{q(x), \sigma }^{(l)}= H \perp H^{\perp} . $$ 
Therefore 
$$q(x)^{\frac{l-1}{2}} V_{q(x), \sigma}^{(l)} = (q(x)^{\frac{l+1}{2}} V_{q(x), \sigma}^{(l)} )^{\perp} = q(x)^{\frac{l+1}{2}} V_{q(x), \sigma}^{(l)}  \perp H^{\perp} .$$
This induces a canonical isomorphism of $k$-vector spaces
$$ V_{q(x), \sigma}^{(l)}/q(x)V_{q(x), \sigma}^{(l)} \xrightarrow{\Delta^{\frac{l-1}{2}} \cdot }  q(x)^{\frac{l-1}{2}} V_{q(x), \sigma}^{(l)} / q(x)^{\frac{l+1}{2}} V_{q(x), \sigma}^{(l)} \cong H^{\perp}$$ where $\Delta=\sigma -\sigma^{-1}$.  Since $\langle \Delta u, v \rangle = - \langle u, \Delta v \rangle $ and 
$$ b_{\sigma}^{(l)} (\bar u, \bar v)= \langle \Delta^{l-1} u, v \rangle = (-1)^{\frac{l-1}{2}} \langle \Delta^{\frac{l-1}{2}} u ,  \Delta^{\frac{l-1}{2}} v \rangle, \ \ \forall u, v\in V_{q(x), \sigma}^{(l)} $$
 by (\ref{bil}), the canonical isomorphism is also an isomorphism of the corresponding quadratic spaces up to $(-1)^{\frac{l-1}{2}}$ as desired. 
\end{proof} 

As an application, one has the following sufficient conditions for Theorem \ref{main} over any field $k$ with $char(k)\neq 2$. 

\begin{cor} \label{hyper}  Let $(V,  \langle \  , \  \rangle)$ be a non-degenerate quadratic space over a field of $k$ with $char(k)\neq 2$ and $f(x)$ be the characteristic polynomial of $\sigma\in {\rm O}(V)$  with the factorization (\ref{factor}).  Write 
$$ m_0= \sharp \{ 2l+1 : V_{x+1, \sigma}^{(2l+1)} \neq 0  \} + \sharp \{ 2l+1: V_{x-1, \sigma}^{(2l+1)} \neq 0 \} . $$

If $m_0\leq 1$ and $m_1=0$, then any $\tau\in {\rm O}(V)$ which is conjugate to $\sigma$ in ${\rm GL}(V)$ is conjugate to $\sigma$ in ${\rm O}(V)$.
\end{cor}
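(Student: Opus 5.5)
Let $\tau\in{\rm O}(V)$ be conjugate to $\sigma$ in ${\rm GL}(V)$. Then $\tau$ and $\sigma$ have the same characteristic polynomial and the same $k[x]$-module structure on $V$. Hence for every monic irreducible factor $q(x)$ of $f(x)$, the ranks of the free $k[x]/(q(x)^l)$-modules $V^{(l)}_{q(x),\tau}$ and $V^{(l)}_{q(x),\sigma}$ agree for all $l$. By Milnor's classification \cite[Theorem 3.3]{Mil}, together with Theorem \ref{complete invariant} for the non-reciprocal factors, it suffices to show that every hermitian (resp. quadratic) invariant attached to $\tau$ is isomorphic to the corresponding one attached to $\sigma$. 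Since $m_1=0$, only three kinds of invariants occur: those attached to $x\pm1$, and those attached to the pairs $\tilde p_j\tilde p_j^*$.

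\textbf{Non-reciprocal pairs.} For each pair $\tilde p_j\tilde p_j^*$ and each $i$, the invariants $(H^{(i)}_\sigma,h^{(i)}_\sigma)$ and $(H^{(i)}_\tau,h^{(i)}_\tau)$ are non-degenerate hermitian spaces over $E=k[x]/(\tilde p_j\tilde p_j^*)$ of the same rank. By Proposition \ref{orth-base} they are isomorphic. (Separability of $\tilde p_j$ holds over a local field of characteristic $\neq 2$ in the relevant situation, or one uses the $p$-part argument.) So these components impose no obstruction.

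\textbf{Even exponents at $x\pm1$.} For $q=x\pm1$ and even $l$, Proposition \ref{pm 1} shows that both $V^{(l)}_{q,\sigma}$ and $V^{(l)}_{q,\tau}$ are orthogonal sums of hyperbolic planes. I would follow Milnor's remark for this case: the invariant for even $l$ is a skew form, and non-degenerate skew forms of equal dimension are isometric. So again there is no obstruction.

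\textbf{Odd exponents at $x\pm1$: the main step.} Since $m_0\le1$, there is at most one pair $(q,l)$ with $q=x\pm1$ and $l$ odd such that $V^{(l)}_{q,\sigma}\ne0$. If there is none, the invariants already agree and we are done. Otherwise write $V=V^{(l)}_{q,\sigma}\perp V'$, where $V'$ is the sum of all the other components, and similarly $V=V^{(l)}_{q,\tau}\perp V''$. The previous two steps, together with the complete invariance of the decomposition, give $V'\cong V''$ as quadratic spaces; each is a sum of hyperbolic planes plus trace forms of isomorphic hermitian spaces. By Witt cancellation, $V^{(l)}_{q,\sigma}\cong V^{(l)}_{q,\tau}$ as quadratic spaces. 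Proposition \ref{pm 1} writes each side as $W\perp H$ with $H$ hyperbolic of the same dimension, so a second application of Witt cancellation gives $(M/qM,b^{(l)}_\sigma)\cong(M/qM,b^{(l)}_\tau)$, up to the common sign $(-1)^{(l-1)/2}$. This is exactly the equality of the remaining invariant.

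The main point to check is that $V'\cong V''$. These spaces need not be hyperbolic when $m_2>0$; their isometry class must be determined purely by the ${\rm GL}$-data, which is where the complete invariants from the earlier steps are used. Once all invariants agree, \cite[Theorem 3.3]{Mil} and Theorem \ref{complete invariant} give an isometry of $V$ intertwining $\sigma$ and $\tau$.
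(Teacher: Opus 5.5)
Your proposal is correct and follows essentially the same route as the paper. Both split $V$ orthogonally, match the non-reciprocal and even-exponent (symplectic) pieces, use Proposition \ref{pm 1} with Witt cancellation to match the single odd-exponent form, and glue via Milnor's classification (for $x\pm 1$ the relevant result is \cite[Theorem 3.4]{Mil}, not 3.3).

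Two small corrections. First, $V'$ and $V''$ are in fact always hyperbolic: each $V_{\tilde p_j(x),\sigma}$ is totally isotropic of half the dimension of $V_{\tilde p_j(x),\sigma}\oplus V_{\tilde p_j^*(x),\sigma}$ by \cite[Lemma 3.1]{Mil}, and the even-exponent parts are hyperbolic by Proposition \ref{pm 1}, so $V'\cong V''$ is immediate from dimensions. Second, your separability hedge is not accurate over arbitrary fields, nor even over local fields of odd positive characteristic, but you can drop it: extend a $k[x]$-isomorphism $\alpha\colon V_{\tilde p_j(x),\sigma}\to V_{\tilde p_j(x),\tau}$ to the intertwining isometry $\alpha\oplus(\alpha^{-1})^{\vee}$ via the perfect pairings $V_{\tilde p_j(x)}\times V_{\tilde p_j^*(x)}\to k$.
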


\begin{proof}  By \cite[Lemma 3.1]{Mil}, one has
$$ \begin{aligned} V =  \ & V_{x+1, \sigma} \perp V_{x-1, \sigma}  \underset{1\leq j\leq m_2} \perp (V_{\tilde{p}_ j(x), \sigma}\oplus V_{\tilde{p}_ j^*(x), \sigma}) \\
= \ & V_{x+1, \tau} \perp V_{x-1, \tau}  \underset{1\leq j\leq m_2} \perp (V_{\tilde{p}_ j(x), \tau}\oplus V_{\tilde{p}_ j^*(x), \tau})) . \end{aligned} $$
  Since $\sigma$ and $\tau$ are conjugates in ${\rm GL}(V)$, the orthogonal splittings 
$$ V_{x\pm1, \sigma} = V_{x\pm 1, \sigma }^{(1)} \perp \cdots \perp V_{x\pm 1, \sigma }^{(l)} \perp \cdots $$ and
$$ V_{x\pm1, \tau} = V_{x\pm 1, \tau}^{(1)} \perp \cdots \perp V_{x\pm 1, \tau}^{(l)} \perp \cdots $$
 satisfy that  $V_{x\pm 1, \sigma }^{(l)}$ and $V_{x\pm 1, \tau}^{(l)}$ are  free $k[x]/(x\pm 1)^l$-modules with the same rank by \cite[Theorem 3.2]{Mil}.  The structure theorem of finitely generated modules also implies that  $V_{\tilde{p}_j(x), \sigma}  \cong V_{\tilde{p}_j(x), \tau} $
as $k[x]$-modules.  Then there is 
an isomorphism of quadratic spaces
$$ \phi_j:  (V_{\tilde{p}_j(x), \sigma} \oplus V_{\tilde{p}_j^*(x), \sigma}, \langle, \rangle)  \xrightarrow{\cong}   (V_{\tilde{p}_j(x), \tau} \oplus V_{\tilde{p}_j^*(x), \tau}, \langle, \rangle) $$ such that $\phi_j \circ \sigma= \tau \circ \phi_j$ for $1\leq j\leq m_2$.

Since $m_0\leq 1$, one can simply assume that there is at most one $l_0$ such that $$V_{x-1, \sigma }^{(2l_0+1)} \neq 0 \ \ \ \text{and} \ \ \  V_{x-1, \tau}^{(2l_0+1)} \neq 0 . $$ 
Applying Proposition \ref{pm 1},  one obtains 
$$(V_{x-1, \sigma }^{(2l_0+1)}/ (x-1)V_{x-1, \sigma }^{(2l_0+1)}, b_\sigma^{(2l_0+1)}) \cong  (V_{x-1, \tau}^{(2l_0+1)}/(x-1)V_{x-1, \tau}^{(2l_0+1)}, b_\tau^{(2l_0+1)}) $$ by Witt's theorem \cite[42:16. Theorem]{OM}.  
Since the non-degenerate sympletic spaces given by $b_\sigma^{(2l)}$ and $b_\tau^{(2l)}$ are automotically isomorphic for all $l$, there is an isomorphism of quadratic spaces
$$ \rho_{\pm}  : \ (V_{x\pm 1, \sigma}, \langle \  , \  \rangle ) \xrightarrow{\cong} (V_{x\pm 1, \tau}, \langle \  , \  \rangle) $$
 such that $\rho_{\pm} \circ \sigma= \tau \circ \rho_{\pm}$ by \cite[Theorem 3.4]{Mil}. Then 
 $$ \phi=\rho_+\perp \rho_- \perp \phi_1\perp \cdots \perp \phi_{m_2} \in {\rm O}(V) \ \ \ \text{and} \ \ \ \tau=\phi \circ \sigma \circ \phi^{-1} $$ as desired.
\end{proof}

\section{Proof of main theorem}

In this section, we assume that $k$ is a non-archimedean local field.

\begin{thm}  Let $V$ be a non-degenerated quadratic space over a non-archimedean local field $k$ with $char(k)\neq 2$. Suppose $\sigma\in {\rm O}(V)$ and $f(x)$ is the characteristic polynomial of $\sigma$ with the factorization (\ref{factor}). Write 
$$ m_0= \sharp \{ 2l+1 : V_{x+1, \sigma}^{(2l+1)} \neq 0  \} + \sharp \{ 2l+1: V_{x-1, \sigma}^{(2l+1)} \neq 0 \} . $$

Then any isometry in ${\rm O}(V)$ which is conjugate to $\sigma$ in ${\rm GL}(V)$ is conjugate to $\sigma$ in ${\rm O}(V)$ if and only if  one of the following conditions holds

(i) $m_0\leq 1$ and $m_1=0$.

(ii) $m_0=0$ and $m_1=1$ and $V_{p_1(x), \sigma}=V_{p_1(x), \sigma}^{(2l_1+1)}$ for some integer $l_1\geq 0$.   

(iii) $m_0=m_1=1$ and $V_{p_1(x), \sigma}=V_{p_1(x), \sigma}^{(2l_1+1)}$ for some integer $l_1\geq 0$ and the unique non-zero quadratic space $$(V_{x\pm 1, \sigma}^{(2l_0+1)}/(x\pm 1)V_{x\pm 1, \sigma}^{(2l_0+1)} , b_\sigma^{(2l_0+1)})$$ defined by (\ref{bil}) is either one dimensional or a hyperbolic plane.
\end{thm}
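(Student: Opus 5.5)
Both directions go through Milnor's complete invariants. Every $\tau\in{\rm O}(V)$ that is ${\rm GL}(V)$-conjugate to $\sigma$ has the same elementary divisors, so the Jordan data (the ranks of $V^{(l)}_{q,\sigma}$) agree. By \cite[Theorem 3.3, 3.4]{Mil} and Theorem \ref{complete invariant}, the ${\rm O}(V)$-class of $\sigma$ is then determined by the following invariants. For each reciprocal $p_i$ and each $l$ there is a hermitian space over $F_i=k[x]/(p_i)$ with respect to $x\mapsto x^{-1}$; it is hermitian when $l$ is odd and skew-hermitian when $l$ is even. For $x\pm1$ and odd $l$ there are the quadratic spaces $b^{(l)}_\sigma$. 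The symplectic spaces for $x\pm1$ with even $l$, and the hermitian spaces for the $\tilde p_j$, have no moduli by Proposition \ref{orth-base}.

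A $\tau$ with the same Jordan data exists in ${\rm O}(V)$ exactly when the orthogonal sum of the realized pieces is isometric to $V$. The realizations are supplied by Proposition \ref{realization} (for the $p_i$) and Proposition \ref{real-odd} (for $x\pm1$). So the question becomes: among all choices of invariants with the given ranks whose total quadratic space is isometric to $V$, is there exactly one?

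For the local classification, recall that over a non-archimedean local field a quadratic space is determined by its dimension, discriminant and Hasse invariant. Hermitian spaces over a quadratic extension $F/F_0$, where $F_0=k[x+x^{-1}]/(\cdot)$, are determined by rank and discriminant in $F_0^\times/N(F^\times)$, which is a group of order 2. Skew-hermitian forms reduce to hermitian ones by scaling with a trace-zero element; in the even-$l$ pieces one uses $s(x)$, or $\Delta=x-x^{-1}$. The key input is how each invariant contributes to the trace form of $V$. By Proposition \ref{pm 1}, an odd-$l$ piece of $x\pm1$ contributes $(-1)^{(l-1)/2}b^{(l)}_\sigma$ plus hyperbolic planes. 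For the $p_i$, I would prove the analogue of Proposition \ref{pm 1}: an even-$l$ block is hyperbolic, so changing the discriminant there does not change $V$; an odd-$l$ block contributes ${\rm Tr}_{F/k}$ of the hermitian form plus hyperbolic planes. Then I would use Milnor's fact (from the proof of \cite[Theorem 2.1]{Mil}) that for a rank-$r$ hermitian form, changing the discriminant class changes the Hasse invariant of the trace form and leaves its dimension and discriminant fixed.

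Sufficiency. Case (i) is Corollary \ref{hyper}. In case (ii), all $p$-invariants except the single odd one contribute hyperbolic planes, and the hermitian space is pinned down by the Hasse invariant of $V$ via Witt cancellation. In case (iii), the unique $b$-space of dimension $1$ or hyperbolic type cannot change its Hasse invariant while keeping its dimension and discriminant. Indeed, a 1-dimensional space is determined by its discriminant; for a plane of discriminant $-1$ the Hasse invariant is fixed, because the only such space is hyperbolic. So again the hermitian invariant is forced, and the $b$-space is determined by Witt cancellation.

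Necessity. Assume none of (i)–(iii) holds. I would exhibit two choices of invariants with the same total quadratic space by flipping two Hasse-changing invariants at once. The available pairs are:
\begin{enumerate}
\item two odd-$l$ quadratic $b$-spaces (when $m_0\ge2$), flipped in Hasse invariant with dimension and discriminant kept; any space of dimension $\ge 1$ together with another allows a compensating change, and when a space is 1-dimensional one can instead swap determinants between the two;
\item an even-$l$ hermitian invariant for some $p_i$, whose change does not affect $V$ at all; this yields non-conjugate $\tau$ unless every $p_i$-block is odd;
\item two $p_i$'s (when $m_1\ge 2$), or two odd levels of one $p_1$;
\item in case $m_0=m_1=1$, a $b$-space of dimension $\ge2$ that is not hyperbolic, which admits a Hasse flip with dimension and discriminant fixed, compensated by the hermitian flip.
\end{enumerate}
The ${\rm GL}$-classes agree, and Milnor's invariants differ, so $\tau$ is not ${\rm O}(V)$-conjugate to $\sigma$.

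I expect the main obstacle to be the precise computation that a discriminant flip of a hermitian form over the degree-$2d$ field changes the Hasse invariant of its trace form, uniformly in $l$ and including skew-hermitian scaling. This must be done carefully for the realized forms from Proposition \ref{realization}, and the dimension-1 and dimension-2 subtleties in pairing Hasse flips also need care.
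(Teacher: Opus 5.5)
Your route is the paper's: Milnor's invariants, realization of arbitrary invariants by Propositions \ref{realization} and \ref{real-odd}, and dimension/determinant/Hasse bookkeeping via Proposition \ref{pm 1}, the remark after \cite[Theorem 3.3]{Mil} and \cite[Theorem 2.7]{Mil}. Your sufficiency argument and necessity items (2)--(4) are sound. Item (3) even covers $m_1\ge 2$ with single odd levels, which the paper's necessity proof never treats explicitly.

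\textbf{The gap.} It is item (1), the case $m_0\ge 2$, which you flag yourself and where the paper does most of its work. Put $U_\pm=(-1)^{l_\pm}b_\sigma^{(2l_\pm+1)}$, so the two blocks are $U_\pm\perp(\text{hyperbolic})$. A simultaneous Hasse flip needs both $U_\pm$ of dimension $\ge 2$ and neither a hyperbolic plane. A swap only helps for non-isometric spaces of equal dimension. Neither mechanism applies in these cases:
\begin{itemize}
\item $U_-\cong U_+\cong\langle 1\rangle$, e.g.\ a reflection on $\langle 1,1\rangle$;
\item both $U_\pm$ are hyperbolic planes, e.g.\ $1_H\perp(-1_H)$ on $H\perp H$;
\item one is a hyperbolic plane and the other has dimension $\ge 3$.
\end{itemize}
Your sentence ``any space of dimension $\ge 1$ together with another allows a compensating change'' is true, but it is exactly what has to be proved. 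The paper proves it case by case. It rescales two lines by a non-square norm $\gamma$ from $k(\sqrt{-\delta})$. It replaces two hyperbolic planes by two copies of a binary space representing all units. It picks $\xi$ represented by the plane but not by the line. Its list also skips a line or a hyperbolic plane paired with a space of dimension $\ge 3$.

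\textbf{A uniform fix.} If $Q=U_1\perp U_2$ with $\dim U_i\ge 1$, then $Q=U_1'\perp U_2'$ with $\dim U_i'=\dim U_i$ and $U_1'\not\cong U_1$.
\begin{itemize}
\item If $\dim U_2\ge 3$, then $\dim Q\ge \dim U_1+3$, so $Q$ represents every space of dimension $\dim U_1$, and each dimension carries at least two isometry classes.
\item If $\dim U_2\in\{1,2\}$, it suffices to find two subspaces of $Q$ of dimension $\dim U_2$ with different determinants, since their orthogonal complements then have different determinants. Such subspaces exist because every binary space over $k$ represents at least two square classes: the norm group of a quadratic extension has index at most $2$ and $|k^\times/k^{\times 2}|\ge 4$. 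For $\dim U_2=1$ take two lines of distinct classes in $Q$. For $\dim U_2=2$ write $Q\cong\langle a\rangle\perp Q_1$ with $\dim Q_1\ge 2$, and take $\langle a,b\rangle$ and $\langle a,b'\rangle$ with $b,b'$ distinct classes represented by $Q_1$.
\end{itemize}
Apply this to $U_\pm$ and realize $(-1)^{l_\pm}U_\pm'$ by Proposition \ref{real-odd}. This gives the required non-conjugate $\tau$ in every subcase.

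\textbf{A side remark.} The obstacle you anticipate is not a real one. With the self-adjoint generator $s(\sigma)=\sigma^{-d}p(\sigma)$ the forms on $V^{(l)}_{p,\sigma}/pV^{(l)}_{p,\sigma}$ are hermitian at every level; your skew-hermitian even levels come from using a skew-adjoint generator, and are harmless anyway. The splitting of an odd block as ${\rm Tr}_{F/k}\circ h\perp(\text{hyperbolic})$ holds for any isometry. So nothing needs recomputing on the realized forms; the trace-form behaviour is exactly what the paper takes from \cite[Theorem 2.7]{Mil}.
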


\begin{proof}   
Let $\tau\in {\rm O}(V)$ such that $\tau$ is conjugate to $\sigma$ in ${\rm GL}(V)$. Then the characteristic polynomial of $\tau$ is also $f(x)$ and 
$$ \begin{aligned} V = & \ V_{x+1, \sigma} \perp V_{x-1, \sigma} \perp ( \underset{1\leq i\leq m_1} \perp  V_{p_i(x), \sigma}) \perp (\underset{1\leq j\leq m_2} \perp (V_{\tilde{p}_ j(x), \sigma}\oplus V_{\tilde{p}_ j^*(x), \sigma})) \\
= & \ V_{x+1, \tau} \perp V_{x-1, \tau} \perp ( \underset{1\leq i\leq m_1} \perp  V_{p_i(x), \tau}) \perp (\underset{1\leq j\leq m_2} \perp (V_{\tilde{p}_ j(x), \tau}\oplus V_{\tilde{p}_ j^*(x), \tau})) \end{aligned}$$
by \cite[Lemma 3.1]{Mil}.  Moreover, there are orthogonal splittings 
$$ V_{x\pm1, \sigma} = V_{x\pm 1, \sigma }^{(1)} \perp \cdots \perp V_{x\pm 1, \sigma }^{(l)} \perp \cdots $$
and 
$$ V_{x\pm1, \tau} = V_{x\pm 1, \tau}^{(1)} \perp \cdots \perp V_{x\pm 1, \tau}^{(l)} \perp \cdots $$
where $V_{x\pm 1, \sigma }^{(l)}$ and $V_{x\pm 1, \tau}^{(l)}$ are  free $k[x]/(x\pm 1)^l$-modules with the same rank by \cite[Theorem 3.2]{Mil}. Similarly, 
$$ V_{p_i(x), \sigma} = V_{p_i(x), \sigma}^{(1)} \perp \cdots \perp V_{p_i(x), \sigma}^{(l)}\perp \cdots $$
and 
$$ V_{p_i(x), \tau} = V_{p_i(x), \tau}^{(1)} \perp \cdots \perp V_{p_i(x), \tau}^{(l)}\perp \cdots $$
where $V_{p_i(x), \sigma}^{(l)}$ and $V_{p_i(x), \tau}^{(l)}$ are free $k[x]/(p_i(x)^l)$-modules of the same rank by \cite[Theorem 3.2]{Mil} for $1\leq i\leq m_1$. By the same argument as those in Corollary \ref{hyper},  there is an isomorphism of quadratic spaces
$$ \phi_j : \ (V_{\tilde{p}_ j(x), \sigma}\oplus V_{\tilde{p}_ j^*(x), \sigma}, \langle \  , \  \rangle ) \xrightarrow{\cong} (V_{\tilde{p}_ j(x), \tau}\oplus V_{\tilde{p}_ j^*(x), \tau}, \langle \  , \  \rangle) $$
such that $\phi_j \circ \sigma= \tau \circ \phi_j$ for $1\leq j\leq m_2$. 

\medskip

{\bf Sufficiency}. One only needs to treat (ii) and (iii) by Corollary \ref{hyper}.

\medskip

If $m_0=0$ and $m_1=1$ and $V_{p_1(x), \sigma}=V_{p_1(x), \sigma}^{(2l_1+1)}$,  then  $V_{p_1(x), \tau}=V_{p_1(x), \tau}^{(2l_1+1)}$ and 
$$ ( V_{p_1(x), \sigma}^{(2l_1+1)}/p_1(x) V_{p_1(x), \sigma}^{(2l_1+1)}, \ b_\sigma^{(2l_1+1)} )\cong (V_{p_1(x), \tau}^{(2l_1+1)}/ p_1(x) V_{p_1(x), \tau}^{(2l_1+1)}, \ b_{\tau}^{(2l_1+1)}) $$
 where 
\begin{equation} \label{bil-s} b_\sigma^{(2l_1+1)} (\bar u, \bar v) = \langle (\sigma^{-d} p_1(\sigma))^{2l_1} u, v \rangle , \ \ \ \forall  u, v \in V_{p_1(x), \sigma}^{(2l_1+1)} \end{equation}
and 
\begin{equation} \label{bil-t}  b_\tau^{(2l_1+1)} (\bar u, \bar v) = \langle (\tau^{-d} p_1(\tau) )^{2l_1} u ,  v \rangle , \ \ \ \forall  u, v \in V_{p_1(x), \tau}^{(2l_1+1)} \end{equation}  with $\deg (p_1(x))=2d$ by the remark after \cite[Theorem 3.3]{Mil} and Witt's theorem \cite[42:16. Theorem]{OM}. Let 
$$(V_{p_1(x), \sigma}^{(2l_1+1)}/p_1(x) V_{p_1(x), \sigma}^{(2l_1+1)}, h_\sigma^{(2l_1+1)}) \ \text{ and } \ (V_{p_1(x), \tau}^{(2l_1+1)}/ p_1(x) V_{p_1(x), \tau}^{(2l_1+1)}, h_\tau^{(2l_1+1)})$$ be the corresponding hermitian spaces such that 
$$ b_\sigma^{(2l_1+1)}= {\rm Tr}_{F/k}  \circ h_\sigma^{(2l_1+1)} \ \ \ \text{and} \ \ \ b_{\tau}^{(2l_1+1)} = {\rm Tr}_{F/k} \circ h_\tau^{(2l_1+1)} $$
with $F=k[x]/(p_1(x))$ by \cite[Lemma 1.1]{Mil}.  One has an isomorphism of hermitian spaces
$$(V_{p_1(x), \sigma}^{(2l_1+1)}/p_1(x) V_{p_1(x), \sigma}^{(2l_1+1)}, h_\sigma^{(2l_1+1)}) \cong (V_{p_1(x), \tau}^{(2l_1+1)}/ p_1(x) V_{p_1(x), \tau}^{(2l_1+1)}, h_\tau^{(2l_1+1)})$$
by \cite[Theorem 2.7]{Mil}. 
Then there is an isomorphism of quadratic spaces
$$ \psi_1:  (V_{p_1(x), \sigma}, \langle \ , \rangle ) \xrightarrow{\cong} (V_{p_1(x), \tau}, \langle \ , \rangle ) $$
such that $\psi_1 \circ \sigma= \tau \circ \psi_1$ by \cite[Theorem 3.3]{Mil}. Therefore
$$ \phi = (\rho_+\perp \rho_- \perp \psi_1 \perp \phi_1 \perp \cdots \perp  \phi_{m_2} ) \in {\rm O}(V) $$ and $\tau=\phi \circ \sigma \circ \phi^{-1}$ as desired. 

\medskip

If $m_0=m_1=1$ and $V_{p_1(x), \sigma}=V_{p_1(x), \sigma}^{(2l_1+1)}$ for some integer $l_1\geq 0$ and one simply assume that $$(V_{x-1, \sigma}^{(2l_0+1)}/(x-1)V_{x - 1, \sigma}^{(2l_0+1)} , b_\sigma^{(2l_0+1)})$$  is either one dimensional or a hyperbolic plane, then $$V_{p_1(x), \tau}=V_{p_1(x), \tau}^{(2l_1+1)} \ \ \ \text{ and } \ \ \ 
1\leq \dim_k (V_{x-1, \tau}^{(2l_0+1)}/(x-1)V_{x - 1, \tau}^{(2l_0+1)})\leq 2$$ respectively. Since the determinants of quadratic spaces 
$$ ( V_{p_1(x), \sigma}^{(2l_1+1)}/p_1(x) V_{p_1(x), \sigma}^{(2l_1+1)}, \ b_\sigma^{(2l_1+1)} ) \ \ \text{and} \ \ (V_{p_1(x), \tau}^{(2l_1+1)}/ p_1(x) V_{p_1(x), \tau}^{(2l_1+1)}, \ b_{\tau}^{(2l_1+1)}) $$
defined by (\ref{bil-s}) and (\ref{bil-t}) are equal by \cite[Theorem 2.7]{Mil}, one obtains the determinants of quadratic spaces 
$$(V_{x-1, \sigma}^{(2l_0+1)}/(x-1)V_{x - 1, \sigma}^{(2l_0+1)} , b_\sigma^{(2l_0+1)}) \ \text{and} \ (V_{x-1, \tau}^{(2l_0+1)}/(x-1)V_{x - 1, \tau}^{(2l_0+1)} , b_\tau^{(2l_0+1)})$$
are equal by the remark after \cite[Theorem 3.3]{Mil} and Proposition \ref{pm 1}.  Then 
$$(V_{x-1, \sigma}^{(2l_0+1)}/(x-1)V_{x - 1, \sigma}^{(2l_0+1)} , b_\sigma^{(2l_0+1)}) \cong  (V_{x-1, \tau}^{(2l_0+1)}/(x-1)V_{x - 1, \tau}^{(2l_0+1)} , b_\tau^{(2l_0+1)}) $$
as quadratic spaces by the assumption and \cite[42:9]{OM}.  One further obtains an isomorphism of quadratic spaces 
$$( V_{p_1(x), \sigma}^{(2l_1+1)}/p_1(x) V_{p_1(x), \sigma}^{(2l_1+1)}, \ b_\sigma^{(2l_1+1)} ) \cong (V_{p_1(x), \tau}^{(2l_1+1)}/ p_1(x) V_{p_1(x), \tau}^{(2l_1+1)}, \ b_{\tau}^{(2l_1+1)}) $$
by Proposition \ref{pm 1} and the remark after \cite[Theorem 3.3]{Mil} and Witt's theorem \cite[42:16. Theorem]{OM}. The result follows from the same arguments as above and those in Corollary \ref{hyper}.

\medskip

{\bf Necessity}. Suppose $m_1\geq 1$ and there is a positive integer $l_1$ such that $V_{p_1(x), \sigma}^{(2l_1)} \neq 0$. Let $$( V_{p_1(x), \sigma}^{(2l_1)} / p_1(x) V_{p_1(x), \sigma}^{(2l_1)} ,  h^{(2l_1)} )$$ be the corresponding hermitian space given in \cite[Theorem 3.3]{Mil}. Since $k[x]/(p_1(x))$ is a non-archimedean local field, there is another hermitian space
$$(V_{p_1(x), \sigma}^{(2l_1)} / p_1(x) V_{p_1(x), \sigma}^{(2l_1)} ,  h_0^{(2l_1)} )$$
with a different determinant by \cite[Example 3, \S 1]{Mil}. By Proposition \ref{realization},  there is a non-degenerate quadratic space $(V_{p_1(x), \sigma}^{(2l_1)} , \langle \ , \rangle ')$ with an isometry $\tau_0$ such that $(V_{p_1(x), \sigma}^{(2l_1)} / p_1(x) V_{p_1(x), \sigma}^{(2l_1)} , \ h_0^{(2l_1)} )$ is a complete invariant of this pair. By the remark after \cite[Theorem 3.3]{Mil}, one has 
$$(V_{p_1(x), \sigma}^{(2l_1)} , \langle \ , \rangle ) \cong (V_{p_1(x), \sigma}^{(2l_1)} , \langle \ , \rangle '). $$
Replacing $\sigma |_{V_{p_1(x), \sigma}^{(2l_1)}}$ with $\tau_0$ at $V_{p_1(x), \sigma}^{(2l_1)} $ for $\sigma$, one obtains a new isometry $\tau$ of $(V, \langle \ , \rangle)$. Then $\tau$ is conjugate to $\sigma$ in ${\rm GL}(V)$ but not in ${\rm O}(V)$. 

\medskip

Suppose $m_1\geq 1$ and there are non-negative integers $l_1\neq l_2$ such that $V_{p_1(x), \sigma}^{(2l_1+1)} \neq 0$ and $V_{p_1(x), \sigma}^{(2l_2+1)} \neq 0$. Let $$( V_{p_1(x), \sigma}^{(2l_1+1)} / p_1(x) V_{p_1(x), \sigma}^{(2l_1+1)} ,  h^{(2l_1+1)} ) \ \ \ \text{and} \ \ \ ( V_{p_1(x), \sigma}^{(2l_2+1)} / p_1(x) V_{p_1(x), \sigma}^{(2l_2+1)} ,  h^{(2l_2+1)} ) $$ be the corresponding hermitian spaces given in \cite[Theorem 3.3]{Mil} respectively. Then there are hermitian spaces 
 $$( V_{p_1(x), \sigma}^{(2l_1+1)} / p_1(x) V_{p_1(x), \sigma}^{(2l_1+1)} ,  h_0^{(2l_1+1)} ) \ \ \ \text{and} \ \ \ ( V_{p_1(x), \sigma}^{(2l_2+1)} / p_1(x) V_{p_1(x), \sigma}^{(2l_2+1)} ,  h_0^{(2l_2+1)} ) $$
 with different determinants by \cite[Example 3, \S 1]{Mil} respectively. By Proposition \ref{realization},  there are two pairs of a non-degenerate quadratic space with an isometry
  $$((V_{p_1(x), \sigma}^{(2l_1+1)} , \langle \ , \rangle '), \  \tau_1) \ \ \ \text{and} \ \ \  ((V_{p_1(x), \sigma}^{(2l_2+1)} , \langle \ , \rangle '), \ \tau_2) $$ such that $h_0^{(2l_1+1)} $ and $h_0^{(2l_2+1)}$ are the complete invariants respectively.  Since 
 $$(V_{p_1(x), \sigma}^{(2l_1+1)} , \langle \ , \rangle )\perp (V_{p_1(x), \sigma}^{(2l_2+1)} , \langle \ , \rangle) \cong (V_{p_1(x), \sigma}^{(2l_1+1)} , \langle \ , \rangle ') \perp (V_{p_1(x), \sigma}^{(2l_2+1)} , \langle \ , \rangle ')$$ by comparing the Hasse symbols with \cite[58:3 Remark]{OM}, the remark after \cite[Theorem 3.3]{Mil} and \cite[Theorem 2.7]{Mil}, one obtains a new isometry $\tau$ of $(V, \langle \ , \rangle)$ by replacing $$\sigma |_{V_{p_1(x), \sigma}^{(2l_1+1)}} \perp \sigma |_{V_{p_1(x), \sigma}^{(2l_2+1)}} \ \ \ \text{ with } \ \ \ \tau_1\perp \tau_2$$ for $\sigma$. Then $\tau$ is conjugate to $\sigma$ in ${\rm GL}(V)$ but not in ${\rm O}(V)$.

 \medskip
 
 Suppose $m_0\geq 2$. Without loss of generality, we assume that there are non-negative integers $l_{-}$ and $l_{+}$ such that $$V_{x-1, \sigma}^{(2l_{-}+1)} \neq 0 \ \ \ \text{ and } \ \ \ V_{x+1, \sigma}^{(2l_{+}+1)} \neq 0 . $$ Write 
 $$ W_-= V_{x-1, \sigma}^{(2l_{-}+1)}/(x-1)V_{x-1, \sigma}^{(2l_{-}+1)}  \ \ \text{and}  \ \ W_+= V_{x+1, \sigma}^{(2l_{+}+1)}/(x+1)V_{x+1, \sigma}^{(2l_{+}+1)} .$$

In the following arguments, we'll construct two quadratic spaces $(W_-, b_-)$ and $(W_+, b_+)$ satisfying
 $$(W_-, b_-) \not \cong (W_-,  b_{\sigma}^{(2l_-+1)})  \ \ \ \text{ and } \ \ \ (W_+, b_+) \not \cong (W_+,  b_{\sigma}^{(2l_++1)}) $$ 
 defined by $(\ref{bil})$. By Proposition \ref{real-odd},  one obtains two pairs of a quadratic space with an isometry 
  $$((V_{x-1, \sigma}^{(2l_-+1)} , \langle \ , \rangle '), \  \tau_-) \ \ \ \text{and} \ \ \  ((V_{x+1, \sigma}^{(2l_++1)} , \langle \ , \rangle '), \ \tau_+) $$ 
 such that $(W_-, b_-)$ and $(W_+, b_+)$ are the complete invariants respectively. We further need 
  \begin{equation} \label{twist}   \begin{aligned} & (V_{x-1, \sigma}^{(2l_-+1)} , \langle \ , \rangle )\perp (V_{x+1, \sigma}^{(2l_+ +1)} , \langle \ , \rangle)  \\
  \cong \ & (V_{x-1, \sigma}^{(2l_-+1)} , \langle \ , \rangle ') \perp (V_{x+1, \sigma}^{(2l_+ +1)} , \langle \ , \rangle ') \end{aligned}  \end{equation}
 in the above construction. A new isometry $\tau$ of $(V, \langle \ , \rangle)$ is obtained by replacing $$\sigma|_{V_{x-1, \sigma}^{(2l_-+1)}} \perp \sigma |_{V_{x+1, \sigma}^{(2l_+ +1)}} \ \ \ \text{ with } \ \ \ \tau_-\perp \tau_+$$ for $\sigma$. Then $\tau$ is conjugate to $\sigma$ in ${\rm GL}(V)$ but not in ${\rm O}(V)$.

If $ \min \{ \dim_k (W_-),  \dim_k (W_{+})\} \geq 2 $
 and neither of $(W_-,  b_{\sigma}^{(2l_-+1)})$ nor $(W_+,  b_{\sigma}^{(2l_++1)})$ is a hyperbolic plane, we choose
 $ (W_-, \ b_-)$ and $(W_+, \ b_+)$ with the same determinants as $b_{\sigma}^{(2l_-+1)}$ and $b_{\sigma}^{(2l_++1)}$ but the different Hasse symbols respectively by \cite[63:22.  Theorem]{OM}. The property (\ref{twist}) is satisfied by comparing the Hasse symbols with \cite[58:3 Remark]{OM} and Proposition \ref{pm 1}. 

 If $\dim_k (W_-)=\dim_k (W_{+})=2$ and one of $$ (W_-,  b_{\sigma}^{(2l_-+1)}) \ \ \ \text{ and } \ \ \ (W_+,  b_{\sigma}^{(2l_++1)})$$ is a hyperbolic plane with $(W_-, \ b_{\sigma}^{(2l_-+1)}) \not \cong (W_+, \ b_{\sigma}^{(2l_++1)}) $,  one can assume that $(W_-, \ b_{\sigma}^{(2l_-+1)})$ is a hyperbolic plane. Choose $(W_-, b_-)$ to be $$(W_-, b_-) \cong (W_+, (-1)^{(l_+ - l_-)} b_\sigma^{(2l_++1)})$$ and $(W_+, b_+)$ to be a hyperbolic plane. The property (\ref{twist}) follows from Proposition \ref{pm 1}. 
 
 If both $(W_-, b_{\sigma}^{(2l_-+1)})$ and $(W_+, b_{\sigma}^{(2l_++1)})$ are hyperbolic planes, then one can choose $(W_-, b_-)$ and 
 $(W_+, b_+)$ such that 
 $$(W_-, (-1)^{l_-}b_-) \cong (W_+, (-1)^{l_+} b_+ )$$ which is not a hyperbolic plane but represents all units of $k$ by \cite[63:15. Example (i)]{OM}.  Then the property (\ref{twist}) follows from Proposition \ref{pm 1}. 
 
 If $\dim_k (W_-)=\dim_k (W_{+})=1$, one can choose $$\gamma\in N_{k(\sqrt{-\delta})/k}(k(\sqrt{-\delta})^\times)\setminus (k^\times)^2$$
 where $\delta$ is the determinant of binary quadratic space $$(W_-, (-1)^{l_-}b_\sigma^{(2l_-+1)}) \perp (W_+, (-1)^{l_+}  b_{\sigma}^{(2l_++1)}) .$$ Define $(W_-, b_-)$ and $(W_+, b_+)$ to be
 $$ (W_-, b_-) \cong (W_-, \gamma b_\sigma^{(2l_-+1)}) \ \ \ \text{and} \ \ \ (W_+, b_+) \cong (W_+,  \gamma b_{\sigma}^{(2l_++1)}) .$$
 Then the property (\ref{twist}) follows from Proposition \ref{pm 1} and \cite[63:15. Example (iii)]{OM}. 
 
 The last possibility is one of $W_-$ and $W_+$ is one dimensional and the other one is two dimensional. Without loss of generality, we assume $\dim_k(W_-)=1$ and $\dim_k(W_+)=2$. 
Choose $$ \xi\in (-1)^{l_-+l_+} b_{\sigma}^{(2l_++1)}(W_+) \setminus b_\sigma^{(2l_-+1)}(W_-) $$ by \cite[63:15. Example (ii)]{OM}. Then $$(W_+, b_{\sigma}^{(2l_++1)})\cong <(-1)^{l_-+l_+} \xi> \perp <(-1)^{l_-+l_+} \eta >$$ for some $\eta\in k^\times$. Let 
$$(W_-, b_-) \cong <\xi >  \ \ \text{and}  \ \ (W_+, b_+) \cong <(-1)^{l_-+l_+} \alpha> \perp <(-1)^{l_-+l_+} \eta > $$
where $\alpha$ is the determinant of $(W_-, b_\sigma^{(2l_-+1)})$. Then the property (\ref{twist}) follows from Proposition \ref{pm 1}. 

\medskip

Suppose $m_0=m_1=1$ and $V_{p_1(x), \sigma}=V_{p_1(x), \sigma}^{(2l_1+1)}$ for some integer $l_1\geq 0$ and the unique non-zero quadratic space $$(V_{x\pm 1, \sigma}^{(2l_0+1)}/(x\pm 1)V_{x\pm 1, \sigma}^{(2l_0+1)} , b_\sigma^{(2l_0+1)})$$ defined by (\ref{bil}) is more than one dimensional and is not a hyperbolic plane. By \cite[63:22.  Theorem]{OM}, there is a quadratic space $$(V_{x\pm 1, \sigma}^{(2l_0+1)}/(x\pm 1)V_{x\pm 1, \sigma}^{(2l_0+1)} , b_{\pm})$$ with the same determinants as $b_{\sigma}^{(2l_0+1)}$  but the different Hasse symbol. Applying Proposition \ref{real-odd},  one obtains a pairs of a quadratic space with an isometry 
  $((V_{x\pm 1, \sigma}^{(2l_0+1)} , \langle \ , \rangle '), \  \tau_\pm) $ 
 such that $(V_{x\pm 1, \sigma}^{(2l_0+1)}/(x\pm 1)V_{x\pm 1, \sigma}^{(2l_0+1)} , b_{\pm})$ is the complete invariant.
 
 Let $$(V_{p_1(x), \sigma}^{(2l_1+1)}/p(x)V_{p_1(x), \sigma}^{(2l_1+1)}, h_\sigma^{(2l_1+1)})$$ be the corresponding hermitian space given in \cite[Theorem 3.3]{Mil}. Then  there is another hermitian structure
$$(V_{p_1(x), \sigma}^{(2l_1+1)} / p_1(x) V_{p_1(x), \sigma}^{(2l_1+1)} ,  h_0^{(2l_1+1)} )$$
with a different determinant by \cite[Example 3, \S 1]{Mil}.  Applying Proposition \ref{realization}, one obtains a pair $((V_{p_1(x), \sigma}^{(2l_1+1)} , \langle \ , \rangle '), \tau_1)$ of a non-degenerate quadratic space with an isometry such that $(V_{p_1(x), \sigma}^{(2l_1+1)} / p_1(x) V_{p_1(x), \sigma}^{(2l_1+1)} , \ h_0^{(2l_1+1)} )$ is a complete invariant of this pair.
Applying the remark after \cite[Theorem 3.3]{Mil} and Proposition \ref{pm 1}, one has
$$ (V_{p_1(x), \sigma}^{(2l_1+1)}, \langle \ , \rangle) \perp (V_{x\pm 1, \sigma}^{(2l_0+1)}, \langle \ , \rangle) \cong (V_{p_1(x), \sigma}^{(2l_1+1)}, \langle \ , \rangle') \perp (V_{x\pm 1, \sigma}^{(2l_0+1)}, \langle \ , \rangle') $$ as quadratic spaces by comparing the determinants using \cite[Theorem 2.7]{Mil} and the Hasse symbols using \cite[58:3 Remark]{OM}.  
 Replacing $$\sigma |_{V_{p_1(x), \sigma}^{(2l_1+1)}} \perp \sigma |_{V_{x\pm 1, \sigma}^{(2l_0+1)}} \ \ \ \text{ with } \ \ \ \tau_1\perp \tau_{\pm}$$ for $\sigma$, one obtains $\tau\in {\rm O}(V)$  which conjugate to $\sigma$ in ${\rm GL}(V)$ but not in ${\rm O}(V)$ as desired.
\end{proof}

\end{document}